\let\frak\mathfrak
\def\>{\relax\ifmmode\mskip.666667\thinmuskip\relax\else\kern.111111em\fi}
\def\<{\relax\ifmmode\mskip-.333333\thinmuskip\relax\else\kern-.0555556em\fi}
\def\vsk#1>{\vskip#1\baselineskip}
\def\vv#1>{\vadjust{\vsk#1>}\ignorespaces}
\def\vvn#1>{\vadjust{\nobreak\vsk#1>\nobreak}\ignorespaces}
 \let\alb\allowbreak
  \let\ssize\scriptstyle
\let\sssize\scriptscriptstyle
\let\Medskip\medskip
\def\medskip{\par\Medskip}
\let\Bigskip\bigskip
\def\bigskip{\par\Bigskip}
\let\Maketitle\maketitle
\def\maketitle{\Maketitle\thispagestyle{empty}\let\maketitle\empty}
\newtheorem{thm}{Theorem}[section]
\newtheorem{cor}[thm]{Corollary}
\newtheorem{lem}[thm]{Lemma}
\newtheorem{prop}[thm]{Proposition}
\newtheorem{ex}[thm]{Example}
\newtheorem{exmp}[thm]{Example}
\theoremstyle{definition}                                  
\numberwithin{equation}{section}
\theoremstyle{definition}
\newtheorem*{rem}{Remark}
\let\mc\mathcal
\let\nc\newcommand
\let\al\alpha
\let\ka\kappa
\let\la\lambda
\let\phi\varphi
\let\der\partial
\let\ox\otimes
\let\geq\geqslant
\let\leq\leqslant
\let\on\operatorname
\let\bi\bibitem
\let\bs\boldsymbol
\def\C{{\mathbb C}}
\def\Z{{\mathbb Z}}
\def\R{{\mathbb R}}
\def\F{{\mathbb F}}   
\def\+#1{^{\{#1\}}}
\def\lsym#1{#1\alb\dots\relax#1\alb}
\def\lc{\lsym,}
\def\End{\on{End}}
\def\id{{\on{id}}}
\def\beq{\begin{equation}}
\def\eeq{\end{equation}}
\def\be{\begin{equation*}}
\def\ee{\end{equation*}}
\nc{\bea}{\begin{eqnarray*}}
\nc{\eea}{\end{eqnarray*}}
\nc{\bean}{\begin{eqnarray}}
\nc{\eean}{\end{eqnarray}}
\let\ga\gamma
\let\Ga\Gamma
\nc{\Il}{{\mc I_{\bs\la}}}
\nc{\bla}{{\bs\la}}
\nc{\Fla}{\F_\bla}
\nc{\tfl}{{T^*\Fla}}
\nc{\GL}{{GL_n(\C)}}
\nc{\GLC}{{GL_n(\C)\times\C^*}}
\let\sd s 
\def\ddk_#1{\kk_{#1}\<\>\frac\der{\der\<\>\kk_{#1}}}
\def\bul{\mathbin{\raise.2ex\hbox{$\sssize\bullet$}}}
\def\intt{\mathchoice
{\mathop{\raise.2ex\rlap{$\,\,\ssize\backslash$}{\intop}}\nolimits}
{\mathop{\raise.3ex\rlap{$\,\sssize\backslash$}{\intop}}\nolimits}
{\mathop{\raise.1ex\rlap{$\sssize\>\backslash$}{\intop}}\nolimits}
{\mathop{\rlap{$\sssize\<\>\backslash$}{\intop}}\nolimits}}
\let\kk q 
\let\cc c
\let\Ko K
\def\GZ/{Gelfand-Zetlin}
\def\KZ/{{\slshape KZ\/}}
\def\qKZ/{{\slshape qKZ\/}}
\def\XXX/{{\slshape XXX\/}}
\def\Sym{\on{Sym}}
\nc{\A}{{\mc A}}
\def\Sing{{\on{Sing}}}
\def\slt{{\frak{sl}_2}}
\nc{\hsl}{\widehat{{\frak{sl}_2}}}
\nc{\BC}{{ \mathbb C}}
\nc{\lra}{\longrightarrow}
\nc{\CO}{{\mathcal{O}}}
\nc{\BZ}{{ \mathbb Z}}
\nc{\hfn}{\hat{\frak{n}}}
\nc\Zs{{\Z/p^s\Z}}
\nc\Zo{{\Zs[z]^0}}
\nc\gr{{\on{gr}}}
\nc\fD{{\frak D}}
\begin{document}

\hrule width0pt
\vsk->

\title[Solutions of  \qKZ/ equations modulo an integer]
{Solutions of the $\slt$ \qKZ/ equations modulo an integer}

\author[Evgeny Mukhin and Alexander Varchenko]
{Evgeny Mukhin and Alexander Varchenko}

\maketitle

\begin{center}

{\it $\kern-.4em^\circ\<$Department of Mathematical Sciences,
Indiana University\,--\>Purdue University Indianapolis\kern-.4em\\
402 North Blackford St, Indianapolis, IN 46202-3216, USA\/}

\vsk.5>

{\it $^{\star}$ Department of Mathematics, University
of North Carolina at Chapel Hill\\ Chapel Hill, NC 27599-3250, USA\/}



\end{center}

\vsk>
{\leftskip3pc \rightskip\leftskip \parindent0pt \Small
{\it Key words\/}:  \qKZ/ equations; master polynomials, weight functions, difference differentials.

\vsk.6>
{\it 2020 Mathematics Subject Classification\/}: 11D79 (12H25, 32G34, 33C05, 33E30)
\par}

{\let\thefootnote\relax
\footnotetext{\vsk-.8>\noindent
$^\circ\<${\sl E\>-mail}:\enspace emukhin@iupui.edu\>,
supported in part by Simons Foundation grants \rlap{353831, 709444}
\\
$^\star\<${\sl E\>-mail}:\enspace anv@email.unc.edu\>,
supported in part by NSF grant  DMS-1954266}}

\begin{abstract}
We study the  \qKZ/ difference equations with values in the $n$-th tensor power
 of the vector $\slt$ representation $V$, variables $z_1,\dots,z_n$ and integer step $\kappa$.
For any integer $N$ relatively prime to the step $\kappa$,
 we construct a family of polynomials $f_r(z)$ in variables $z_1,\dots,z_n$ with
  values in $V^{\otimes n}$ such that the coordinates of these polynomials with 
  respect to the standard basis of $V^{\otimes n}$ are polynomials with integer 
  coefficients. We show that the polynomials $f_r(z)$ satisfy the \qKZ/ equations modulo $N$.
 
 Polynomials $f_r(z)$ are modulo $N$ analogs of the  hypergeometric 
 solutions of the \qKZ/ equations given in the form of multidimensional Barnes integrals.
\end{abstract}


\setcounter{footnote}{0}
\renewcommand{\thefootnote}{\arabic{footnote}}

\section{Introduction}

The Knizhnik-Zamolodchikov (\KZ/) differential equations are a system of 
linear differential equations, satisfied by conformal
blocks on the sphere in the WZW model of conformal field theory.
The quantum Knizhnik-Zamolodchikov (\qKZ/) equations are a difference version of the \KZ/ equations which naturally appear in the representation theory of Yangians (rational case) and quantum affine algebras (trigonometric case). 
As a rule one considers the \KZ/ and \qKZ/ equations over the field of complex numbers. Then these differential and difference equations are solved in multidimensional hypergeometric integrals.

\vsk.2>

In \cite{SV} 
the \KZ/ differential equations were considered modulo an integer  $N$. It turned out that modulo $N$ the \KZ/ equations have a family of polynomial solutions.
The construction of these solutions was analogous to the construction of the multidimensional hypergeometric
solutions, and these polynomial solutions were called the $N$-hypergeometric
solutions. 


\smallskip

In this paper we consider modulo $N$ the rational $\slt$ \qKZ/ equations  with values in the $n$-th tensor power of the vector representation of $V$ and
with an integer step $\kappa$. The \qKZ/ equations for a function  $f(z_1,\dots,z_n)$ with values in $V^{\otimes n}$ have the form
\bea
f(z_1,\dots, z_a-\ka,\dots, z_n)\,=\,
K_a(z\:;\ka)\,f(z),\qquad a=1,\dots,n,
\eea
where linear operators $K_a(z\:;\ka)$ are given in terms of the rational $\slt$ $R$-matrix, see \eqref{K}. 
The operators $K_a(z\:;\ka)$ commute with the diagonal action of $\slt$,
 and, therefore, it is sufficient to solve the \qKZ/ equations only with values in the space of singular vectors of a given weight.

We fix an integer $N$ relatively prime to $\kappa$  and construct 
$V^{\ox n}$-valued
polynomials in $z_1,\dots,z_n$ such that their coordinates in the standard basis of $V^{\otimes n}$ are polynomials with integer coefficients. Then we show that these $V^{\ox n}$-valued
polynomials satisfy the \qKZ/ equations modulo $N$. 

The idea of the construction is as follows.
The hypergeometric complex $V^{\otimes n}$-valued solutions of the \qKZ/ 
 are given in \cite{TV, MuV}. The solutions with values in the subspace of 
 singular vectors of weight $n-2l$ are written as $l$-dimensional integrals of the form
\beq\label{sol form}
f(z)=\int_{{\mathbbm{i}}\R^{l}} \Phi(t,z) \,   w(t,z)\, W(t,z)dt\,.
\eeq
Here $t=(t_1,\dots,t_l)$,  $\Phi(t,z)$ is a scalar master function given as a ratio of
products of
 Euler gamma functions, $   w(t,z)$ is the vector-valued rational weight function, and $W(t,z)$ is a scalar $\kappa$-periodic
 function with respect to all variables $t,z$;\,
  cf. Section \ref{app 2 sec}. To show that $f(z)$ is a solution, one observes that the difference
\beq\label{difference}
\Phi(t,z_1,\dots, z_a-\ka,\dots, z_n)\,    w(t,z_1,\dots, z_a-\ka,\dots, z_n)\,-\, 
K_a(z\:,\kappa)\,\Phi(t,z) \,   w(t,z)
\eeq
is a discrete differential with respect to variables $t_1,\dots,t_l$, that is,  
 it can be written in the form $\sum_{s=1}^l(   G_{a,s}(t)-   G_{a,s}(t_1,\dots, t_s-\ka,\dots, t_l))$ 
 for appropriate functions $G_{a,s}$.
 
Then using Stokes' theorem one shows that the integral of a discrete difference equals zero, and hence $f(z)$ is a solution of the \qKZ/ equations.
The hypergeometric solution $f(z)$ depends on the choice of the $\ka$-periodic function $W(t,z)$ from a suitable 
finite-dimensional vector 
space. The functions $W(t,z)$ of that space provide the convergence of the integral in \eqref{sol form} and  applicability of Stokes' theorem.

\smallskip

One may consider the \qKZ/ difference equations with values in
a tensor product $\ox_{i=1}^nV_i$ of finite-dimensional $\frak{sl}_k$-modules
and construct the corresponding hypergeometric solutions like in formula
\eqref{sol form}.  The functions $\Phi(t,z)$ and $\vec w(t,z)$ used in
 \eqref{sol form} have meaning in that more general situation, see  \cite{TV}. They depend on the Cartan matrix and the highest weights of the modules.
In our case the highest weight of the vector representation is $\la=1$ and the Cartan matrix is $C=(2)$.

\smallskip



The main idea
of the construction of polynomial solutions of the \qKZ/ equations modulo $N$,
 is to modify in formulas for $\Phi(t,z)$ and $   w(t,z)$
 the numbers $1$ coming from the highest weight and the numbers $2$ coming from
  the Cartan matrix in such a way, that they do not change modulo $N$ but become multiples of the step $\kappa$.  So, we change in the formulas:
\bea
 1\to 1+Nm=-\kappa k, \qquad  2 \to  2+N m'=\kappa k',
\eea 
where $m,m',k,k'$ are integers. This is possible if and only if $N$ and $\kappa$ are relatively prime. 

After that change, the ratio of products of gamma functions in
the master function $\Phi$ becomes
 a polynomial in $t,z$ due to the fundamental property of the gamma function:
\bea
(-\ka)^k\,\frac{\Gamma\big(\frac{z-k\kappa}{-\kappa}\big)}{\Gamma\big(\frac z{-\ka}\big)}
=z(z-\kappa)\dots (z-(k-1)\kappa)=:[z]_k.
\eea
This modified master function (now called the master polynomial)
is a product of linear factors organized into Pochhammer polynomials of the form $[x]_k$ and $[x]_{k'}$
 with appropriate $x$, see Figure \ref{master pic}. The product $\Phi    w$ also becomes a polynomial. Moreover, each of ${n \choose l}$ coordinates of $\Phi    w$ is a sum of $l!$ terms with each term being
  a product of Pochhammer polynomials of the same kind, see Figure \ref{weight pic}.

Then we check that modulo $N$ the difference \eqref{difference} is still a discrete differential. 

It remains to find a way to integrate over $t$ which eliminates discrete differentials 
of polynomials with integer coefficients. Such a procedure is clearly impossible over complex numbers (since the 
discrete derivative map is surjective), but is well-known modulo an integer $N$.
Namely, let a polynomial $G(t,z)$ be a discrete differential with respect to the $t$-variables,
and let
\bea
G(t,z)=\sum_{r_1,\dots,r_l} a_{r_1,\dots, r_l}(z)[t_1]_{r_1}\dots [t_l]_{r_l}, \qquad
a_{r_1,\dots, r_l}(z)\in \Z[z_1,\dots,z_n].
\eea
Then a coefficient
$a_{r_1,\dots, r_l}(z)$ equals zero modulo $N$ if  $r_i\equiv -1 \pmod N$ for $i=1,\dots,l$.

The assignment to a polynomial $G(t,z)$ 
of a coefficient $a_{r_1,\dots, r_l}(z)$ with $r_i\equiv -1 \pmod N$ for $i=1,\dots,l$,
may be considered as  a modulo $N$ analog of the integral over $t$ of the polynomial $G(t,z)$. We call it a difference $r$-integral. There is a generalization of the difference $r$-integral defined for all sequences of non-negative integers $r=(r_1,\dots,r_l)$ which assigns to a polynomial $G(t,z)$ the polynomial $N_r a_{r_1,\dots, r_l}(z)$, where $N_r$ is the smallest integer such that $N_r(r_i+1)\equiv 0 \pmod N$.
This assignment also gives zero in 
$\Z/NZ$ ,  if $G(t,z)$ is a discrete differential. See Section \ref{int sec}.

Thus, if
$$
\Phi(t,z)    w(t,z)=\sum_{r_1,\dots,r_l}   c_{r_1,\dots, r_l}(z)[t_1]_{r_1}\dots [t_l]_{r_l},
$$
and $(r_1,\dots, r_l)$ is a sequence of non-negative integers
then 
$   f_r=N_r   c_{r_1,\dots, r_l}(z)$ is  a solution  of the \qKZ/ equations modulo $N$. Keeping in mind the language of solutions over complex numbers, we call the solutions constructed in this way, the $N$-hypergeometric solutions.

\smallskip

Since we consider only the tensor products of $\slt$ vector representations, 
and since we have no gamma functions and no  analytic issues, the proofs are simpler than in \cite{MuV} or \cite{TV} and are combinatorial in nature. Moreover, we show that our solutions satisfy the (stronger) symmetric form of the \qKZ/
equations, see \eqref{symm KZ}. This happens due to the symmetric nature of our ``integration''.

\smallskip

The $N$-hypergeometric solution are non-homogeneous polynomials in $z_1,\dots,z_n$. 
One can consider the set of solutions of the form $\sum_r g_r(z)   f_r(z)$, where $g_r(z)$ are scalar polynomials with integer coefficients satisfying
the congruences
$$
 N_r g_r(z_1,\dots, z_i-\ka,\dots,z_n) \equiv
N_r g_r(z_1,\dots, z_i,\dots,z_n) \pmod N 
$$
for all $i$.
It is an interesting problem to describe this set as a module over the ring of  polynomials which are periodic modulo $N$.
 It is nontrivial because this module is not free in general and the answer depends
  on how the integers $\ka, N, n,l$ are related to each other.
We do not address this question in this first joint paper on the subject, but we
 show that the top degree terms of an $N$-hypergeometric solutions of the \qKZ/ 
 equations constitute an $N$-hypergeometric solution of the  \KZ/ equations 
 constructed in \cite{SV}, see Proposition \ref{qkz-kz}, for which some studies have been done, see \cite{V1, V2, V3}.

In contrast to the complex-valued case, the  $N$-hypergeometric solutions is generically do 
not span  the space of singular vectors. Therefore, a natural question is if there are  polynomial modulo $N$ solutions of the \qKZ/ equations which are not $N$-hypergeometric. 

In this paper we consider the \qKZ/ equations with values in a tensor power of the vector
representation of $\slt$. In the same way we  may construct $N$-hypergeometric 
solutions of the \qKZ/ equations with values in a
tensor product of finite-dimensional $\frak{sl}_k$-modules. 

\smallskip

The paper is organized as follows.
In Section \ref{sec 2} we define the \qKZ/ equations.
In Section \ref{diff sec} we discuss the generalities of discrete differentiation and integration modulo $N$.
In Section \ref{master and weight sec} we define the main ingredients of our solutions: master polynomial and weight functions.
In Section \ref{main sec} we formulate and prove our main theorem which provides $N$-hypergeometric solutions of the \qKZ/  equations. These are vectors of non-homogeneous polynomials in $z$ with integer coefficients solving the \qKZ/ equations modulo $N$. In Section \ref{limit sec} we show that the sum of the top degree terms of the $N$-hypergeometric solutions of the \qKZ/ equations 
coincide with the $N$-hypergeomtric solutions of
the \KZ/ equations. In Appendix \ref{sec a} we remind the constructions of the hypergeometric solutions  of the \KZ/ and \qKZ/ equations
 in the special case in which the corresponding hypergeometric integrals are one-dimensional.

\smallskip
The authors thank Bonn University and the Max-Planck Institute for Mathematics in Bonn
for hospitality in May-June of 2022 when this paper was conceived.

\section{Difference \qKZ/ equations}
\label{sec 2}

\subsection{Notations}

Consider the Lie algebra $\slt$ with basis $e,f,h$ and relations
$[e,f]=h$, $[h,e]=2e$,
$ [h,f]=-2f$.
Let $V$ be the two-dimensional $\slt$-module  with basis $v_1,v_2$ and the 
action\ 
$ev_1=0, ev_2=v_1$, $fv_1=v_2, fv_2=0$, $hv_1=v_1, hv_2=-v_2$.

Fix a positive integer $n>1$. The $\slt$-module $V^{\ox n}$ has weight decomposition
\bea
V^{\ox n}=\oplus_{l=0}^n V^{\ox n}[n-2l],
\eea
where $V^{\ox n}[n-2l]$ is the eigenspace of $h$ with eigenvalue $n-2l$.
Let $\Sing V^{\ox n}[n-2l]\subset V^{\ox n}[n-2l]$ be the subspace of
singular vectors  (the vectors annihilated by $e$).

\vsk.2>

Let $\mc I_l$ be the set of all $l$-element subsets of $\{1,\dots,n\}$.
Denote
\bea
v_I=v_{i_1}\ox\dots\ox v_{i_n}\ \in \ V^{\ox n}, 
\eea
where $i_j=2$ if $i_j\in I$ and $i_j=1$ if $i_j\notin I$.
The set $\{v_I\ |\ I\in\mc I_l\}$ is a basis of $V^{\ox n}[n-2l]$.

\subsection{The \qKZ/ equations}
\label{sec qkz}

Define the rational $\,R$-matrix acting on $\:V^{\ox 2}$,
\vvn.2>
\be
R(u)\,=\,\frac{u-P}{u-1}\,,
\vv.3>
\ee
where $P$ is the permutation of factors of $V^{\ox 2}$.
The $R$-matrix satisfies the Yang-Baxter and unitarity equations,
\vvn.4>
\begin{gather}
\label{YB}
R^{(12)}(u-v)R^{(13)}(u)R^{(23)}(v)\,=\,
R^{(23)}(v)R^{(13)}(u)R^{(12)}(u-v)\,,
\\[2pt]
\label{unit}
R^{(12)}(u)R^{(21)}(-u)\,=\,1\,.
\\[-14pt]
\notag
\end{gather}
The first equation is an equation in $\End(V^{\ox 3})$.
The superscript indicates the factors of $\:V^{\ox 3}$
on which the corresponding operators act.

\vsk.2>
Let $\,z=(z_1\lc z_n)$.
Define the \qKZ/ operators $\,K_1,\dots, K_n\>$ acting on $V^{\ox n}$:
\bean
\label{K}
&&K_a(z;\ka) =
R^{(a,a-1)}(z_a-z_{a-1}-\ka)\,\dots\,R^{(a,1)}(z_a-z_1-\ka)
\\
\notag
&&\hspace{120pt}\times\ 
R^{(a,n)}(z_a-z_n)\,\dots\,R^{(a,a+1)}(z_a\<-z_{a+1})\,,
\eean
where $\ka$ is a  parameter.

The \qKZ/ operators preserve the weight decomposition of $V^{\ox n}$,
commute with the $\slt$-action, 
and form a discrete flat connection with step $\ka$,
\bea
K_a(z_1,\dots, z_b-\ka,\dots, z_n;\ka)\,K_b(z;\ka) 
=K_b(z_1,\dots, z_a-\ka\dots, z_n;\ka)  \,K_a(z;\ka)
\eea
for $a,b=1,\dots,n$, see \cite{FR}.

The system of difference equations with step $\ka$,
\vvn.2>
\beq
\label{Ki}
f(z_1,\dots, z_a-\ka,\dots, z_n)\,=\,
K_a(z\:;\ka)\,f(z),\qquad a=1,\dots,n,
\vv.3>
\eeq
for a \,$ V^{\ox n}$-valued
function $f(z)$ is called the \qKZ/ {\it equations\/}.

\vsk.2>
Since the \qKZ/ operators commute with the action of $\slt$ in $V^{\otimes n}$, the \qKZ/ operators preserve the subspace
$\Sing V^{\ox n}[n-2l]$ for any integer $l$. 

\vsk.2>

\subsection{The symmetric \qKZ/ equations}
Let $P^{(a,a+1)}$ be the operator swapping the $a$-th and $a+1$-st tensor factors of $V^{\otimes n}$. 

Let $\mu\in S_n$ be the cyclic permutation $(1,2,\dots,n)$. In terms of simple transpositions we have
\bean
\label{mu}
\mu= s_{1,2} s_{2,3}\dots s_{n-1,n}\,.
\eean
Set 
\bea
P^{(\mu)}=P^{(1,2)}P^{(2,3)}\dots P^{(n-1,n)}.
\eea 
 The following system of equations is called the {\it symmetric \qKZ/ equations}:
 \bean\label{symm KZ}
f(z_1,\dots,z_{a+1}, z_a, \dots,z_n)&=&P^{(a,a+1)} R^{(a,a+1)}(z_a-z_{a+1}) \,f(z), \quad a=1,\dots, n-1,\notag \\
f(z_1-\kappa,z_2,\dots,z_n)&=& P^{(\mu)} f(z_2,\dots,z_n,z_1). 
 \eean
 
The symmetric  \qKZ/ equations imply the  \qKZ/ equations.
\begin{lem}
Let $f(z)$ satisfy \eqref{symm KZ}. Then $f(z)$ satisfies \eqref{Ki}.
\end{lem}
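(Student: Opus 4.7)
The plan is to derive each qKZ equation \eqref{Ki} from the symmetric equations \eqref{symm KZ} by composing three types of elementary transformations. Starting from $f(z)$, I would: (1) iterate the first equation of \eqref{symm KZ} with the swaps $s_{a,a+1}, s_{a+1,a+2},\ldots, s_{n-1,n}$ to move $z_a$ from position $a$ to position $n$, obtaining an explicit operator $O_1$ with $f(z_1,\ldots,z_{a-1},z_{a+1},\ldots,z_n,z_a)=O_1 f(z)$; (2) apply the second equation of \eqref{symm KZ} with argument $u=(z_a,z_1,\ldots,z_{a-1},z_{a+1},\ldots,z_n)$ to conclude $f(z_a-\ka,z_1,\ldots,z_{a-1},z_{a+1},\ldots,z_n)=P^{(\mu)} O_1 f(z)$; (3) iterate the first equation once more with $s_{1,2},\ldots,s_{a-1,a}$ (now with the shifted entry $z_a-\ka$) to move $z_a-\ka$ from slot $1$ back to slot $a$, yielding $f(z_1,\ldots,z_a-\ka,\ldots,z_n)=O_3\,P^{(\mu)} O_1 f(z)$ for an explicit $O_3$. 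Both $O_1$ and $O_3$ are products of operators of the form $P^{(b,b+1)} R^{(b,b+1)}(u)$, with arguments depending on $z_a$ and, in the case of $O_3$, on the shift $-\ka$.

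It remains to verify the operator identity $O_3\,P^{(\mu)} O_1=K_a(z;\ka)$. I would use two basic facts: (a) since the $R$-matrix $R(u)=(u-P)/(u-1)$ lies in the two-dimensional subalgebra generated by $P$, the operator $R^{(b,b+1)}(u)$ commutes with $P^{(b,b+1)}$, and in particular $R^{(c,d)}(u)=R^{(d,c)}(u)$; (b) for any permutation operator $Q$ with underlying permutation $\pi$ acting on tensor-factor positions, one has $Q\,R^{(c,d)}(u)\,Q^{-1}=R^{(\pi(c),\pi(d))}(u)$. Pushing every permutation operator in $O_3\,P^{(\mu)} O_1$ to the right using these identities, every $R$-matrix acquires index $a$, and the product of $R$-matrices becomes exactly
\[
R^{(a,a-1)}(z_a-z_{a-1}-\ka)\cdots R^{(a,1)}(z_a-z_1-\ka)\cdot R^{(a,n)}(z_a-z_n)\cdots R^{(a,a+1)}(z_a-z_{a+1}),
\]
which is $K_a(z;\ka)$, multiplied on the right by a residual product of permutation operators.

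The main step, and essentially the only obstacle, is checking that this residual product equals the identity. As an element of $S_n$ it is $(s_{a-1,a}\cdots s_{1,2})\,\mu\,(s_{n-1,n}\cdots s_{a,a+1})$, and a short case analysis for $i<a$, $i=a$, $i>a$ shows that this permutation fixes every $i\in\{1,\ldots,n\}$. Since the natural representation of $S_n$ on $V^{\ox n}$ by permutation of tensor factors is faithful, the residual operator is the identity on $V^{\ox n}$, finishing the proof. The only work is the bookkeeping of pushing the permutations through the $R$-matrices; no new identities beyond those stated above are required.
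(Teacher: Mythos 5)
Your proposal is correct and follows essentially the same route as the paper: both derive \eqref{Ki} by composing the exchange relations with the cyclic relation of \eqref{symm KZ} and then pushing the permutation operators through the $R$-matrices (using that $P^{(b,b+1)}$ commutes with $R^{(b,b+1)}(u)$ and that conjugation by permutations relabels superscripts) until the residual permutation cancels and the product of $R$-matrices becomes $K_a(z;\ka)$. The only difference is organizational — the paper proves the case $a=1$ first and reduces general $a$ to it via the exchange relations, whereas you treat general $a$ in a single uniform pass — which does not change the substance of the argument.
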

 \begin{proof}
For \eqref{Ki} with $a=1$ we have 
\bea
&&f(z_1-\ka,z_2,\dots, z_n)=P^{(1,2)}\dots P^{(n-1,n)} f(z_2,\dots,z_n,z_1)\\
&& = P^{(1,2)}\dots P^{(n-2,n-1)} P^{(n-1,n)} P^{(n-1,n)}R^{(n-1,n)}(z_1-z_n) f(z_2,\dots,z_{n-1},z_1,z_n)\\
&& =P^{(1,2)}\dots P^{(n-2,n-1)} R^{(n-1,n)}(z_1-z_n) P^{(n-2,n-1)} R^{(n-2,n-1)}(z_1-z_{n-1}) f(z_2,\dots,z_1,z_{n-1},z_n)\\
&&=P^{(1,2)}\dots P^{(n-3,n-2)} R^{(n-2,n)}(z_1-z_n)R^{(n-2,n-1)}(z_1-z_{n-1}) f(z_2,\dots,z_1,z_{n-1},z_n)=\dots\\
&& = R^{(1,n)}(z_1-z_n)\dots R^{(1,2}(z_1-z_2)f(z)=K_1(z\:,\ka) f(z).
\eea
Now we use it to show \eqref{Ki} with $a=2$,
\bea
&&
f(z_1,z_2-\ka,\dots, z_n)=P^{(12)}R^{(12)}(z_2-z_{1}-\ka)f(z_2-\kappa,z_1,\dots,z_n)
\\
&&
=P^{(1,2)} R^{(1,2)}(z_2-z_{1}-\ka)R^{(1,n)}(z_2-z_n)\,\dots\,R^{(1,2)}(z_2-z_{1})f(z_2,z_1,\dots,z_n)
\\
&&
=P^{(1,2)} R^{(1,2)}(z_2-z_{1}-\ka)R^{(1,n)}(z_2-z_n)\,\dots\,R^{(1,2)}(z_2-z_{1})P^{(12)}R^{(1,2)}(z_2-z_1)f(z)
\\
&&
=
\,K_2(z;\ka)\,f(z).
\eea
The proof of equations \eqref{Ki} with $a>2$ is similar.
 \end{proof}
 
In this paper we fix  relatively prime integers $\ka$, $N$. 
We consider equations \eqref{symm KZ} modulo $N$,
 and without loss of generality, we assume that $0<\ka<N$. 
 We construct vectors in $V^{\ox n}[n-2l]$ whose coefficients
  in the basis $v_I$ are polynomials in variables $z$ with integer coefficients,
\beq\label{sol gen form}
   f(z) =\sum_{I\in\mc I_l} f_I(z) \,v_I, \qquad f_I(z)\in \Z[z], 
\eeq
such that equations \eqref{symm KZ} hold modulo $N$ and such that
$e   f(z)=0 $ modulo $N$.

Difference equations modulo $N$ have trivial symmetries. 
Namely, let $   f(z)$ be a polynomial solution modulo $N$ of a difference equation with step $\kappa$. Let $g(z)$ be a periodic polynomial, 
$g(z_1,\dots,z_i-\ka,\dots,z_n)\equiv g(z) \pmod N$ for all $i$. Let $   h(x)$ be any polynomial in variables $z$ with values in $V^{\otimes n}$ 
whose coordinates have integer coefficients.  Then $   f(z) g(z)+N   h(x)$ is also a solution of 
the difference equations modulo $N$.  Moreover, if $N=dd'$, $d,d'\in\Z$, and all 
coefficients of all coordinates $f_I(z)$ of $   f(z)$ are divisible by $d$, then  
$   f(z) g(z)$ is a solution for any  $g(x)$ which is periodic modulo $d'$.

\section{Difference differentials}\label{diff sec}

Assume again that the integers $\ka, N$ are relatively prime and $0<\ka<N$.
Let $k,k'$ be the integers such that $0<k<N$, $0<k'<N$, 
\bean
\label{C}
\ka k\,  \equiv\, -1, \qquad
\ka k'\,\equiv\, 2 \pmod{N}.
\eean
The integers $k,k'$ exist and are unique. 

\subsection{Pochhammer polynomials}
Let $m$ be a positive integer. Define the Pochhammer polynomial
\bea
[t]_m=\prod_{i=1}^{m}(t-(i-1)\ka).
\eea
We have
\bea
[t-\ka]_m = [t]_m\,\frac{t-\ka m}t,
\qquad
[t+\ka]_m = [t]_m\,\frac{t+\ka}{t-(m-1)\ka}.
\eea
We also call a Pochhammer polynomial $[t]_m$ a {\it string} of length $m$ 
which starts at $t$ and ends at $t-(m-1)\ka$.

The string $[t]_m$ should be thought of as a discretized analog of the monomial $t^m$. For example, in parallel to the monomials, there is a Newton binomial formula for strings as well:
\beq\label{Newton}
[t+z]_m=\sum_{i=1}^m {m\choose i} [t]_i[z]_{m-i}.
\eeq

If $N=p$ is prime, and $\kappa$ is still relatively prime to $p$, then 
 \bea
[t]_p\equiv t^p-t \pmod{p}
\eea
by the Little Fermat theorem. 
In this case we also have  
\bea
[t+z]_p\equiv (t+z)^p-(t+z)\equiv t^p-t+z^p-z \equiv [t]_p+[z]_p.
\eea 

\smallskip 

For $M\in\Z$, we call a polynomial $f(t)\in \Z[t]$ an {\it $M$-constant} if
 $f(t-\kappa)\equiv f(t) \pmod M$. We note that strings, whose length is a multiple of $N$, are $N$-constants. More generally, we have 
\bea
b[t-\ka]_{a}=b[t]_a-ab\kappa [t-\ka]_{a-1}. 
\eea 
Therefore, the polynomial $b[t]_a$ is an $N$-constant if and only if $ab\equiv 0 \pmod N$.

\smallskip 
Let $A$ be a $\Z$-algebra. Our main example is $A=\Z[z_1,\dots,z_n]=\Z[z]$. 
The strings $\{[t]_m,\ m\geq 0\}$ form an $A$-basis of $A[t]$ 
which should be thought of as a discrete analog of the monomial basis $\{t^m, \ \ m\geq 0\}$.

More generally, $\{\prod_{i=1}^l[t_i]_{m_i}, \ m_i\in\Z_{\geq 0}\}$ form an $A$-basis of $A[t_1,\dots,t_l]$ which we will often use.

\subsection{Difference $r$-integrals}
\label{int sec}

Let $r=(r_1,\dots,r_l)$ be a sequence of non-negative integers. 
Let $N_r$ be the least positive integer such that $N_r(r_i+1)\equiv 0 \pmod N$ for all $i$. 
In terms of greatest common divisors (gcd) and least common multiples (lcm) we have
\bea
N_r=\frac{N}{\on{gcd}(r_1+1,\dots,r_l+1,N)}= {\rm {lcm}}\left(\frac{N}{{\rm {gcd}}(N,r_1+1)},\dots, \frac{N}{{\rm {gcd}}(N,r_l+1)}\right).
\eea 
We also set 
\bea
M_r=\frac{N}{N_r}= \on{gcd}(r_1+1,\dots,r_l+1,N).
\eea 

For a polynomial $f(t_1,\dots,t_l)=\sum_{m_1,\dots,m_l} c_{m}\prod_{i=1}^l[t_i]_{m_i}\in A[t_1,\dots,t_l]$ 
and a sequence $r=(r_1,\dots,r_l)$ of  non-negative integers,
we define the {\it difference $r$-integral of $f$}  by the formula:
\bea 
\{f\}^{t_1,\dots,t_l}_r=N_rc_r.
\eea
The difference $r$-integral is an  $A$-linear map $\{\cdot \}^{t_1,\dots,t_l}_r : A[t_1,\dots,t_l]\to A$. In the next 
section we show that this map vanishes on discrete differentials modulo $N$.

If $r$ is such that $r_i\equiv -1 \pmod N$, $i=1,\dots,l$, then $N_r=1$.
In this case we call the sequence $r$ a {\it maximal sequence } and 
call the   difference $r$-integral a {\it maximal difference integral}.

If $r$ is such that for some $i$ the number $r_i+1$ is relatively prime to $N$,
 then $N_r=N$ and the corresponding  difference $r$-integral is 
 identically zero modulo $N$. We call such $r$ a {\it trivial sequence}
 and  the difference $r$-integral  a {\it trivial difference integral}.

If $N=p$ is prime, then all non-trivial difference integrals are maximal.

\vsk.2>

\subsection{Discrete differentials}\label{diff subsec}
Define the {\it discrete $t$-derivative} $D_t:\ A[t]\to A[t]$,
\bea
D_t f(t)=f(t)-f(t-\ka).
\eea
Similar to usual differentiation, the discrete derivative is an $A$ linear map which satisfy the Leibniz rule:
\bea
D_t (f(t)g(t))= (D_t f(t))\, g(t) +f(t-\ka)\,(D_t g(t)).
\eea

We call polynomials in the image of the discrete $t$-derivative the {\it discrete differentials}.
We have
\beq\label{diff on basis}
D_t[t]_m  = m \ka \,[t-\ka]_{m-1},
\eeq
Modulo $N$, the discrete derivative $D_t$ has kernel given by $N$-constants, generated by 
\bea
 a[t]_b, \qquad  a,b\in\Z_{>0},\ ab\equiv 0  \pmod N.
\eea 
For any $N$-constant $g(t)$ we have
\bea
D_t (g(t)\, f(t))\equiv g(t)\, D_t  f(t) \pmod N,
\eea
which explains the name ``$N$-constant".

The next proposition asserts that a difference integral computed on a discrete differential is zero.

\begin{prop}
Let $r=(r_1,\dots,r_l)$, $t=(t_1,\dots,t_l)$, and $f\in A[t_1,\dots,t_l]$. Then for any $j=1,\dots,l$,  
\bea
\{ D_{t_j}f \}_r^{t_1,\dots,t_l}\equiv 0 \pmod{N}.
\eea
\end{prop}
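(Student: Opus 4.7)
The plan is to reduce the multivariable statement to a one-variable divisibility computation. Write $f=\sum_{\mathbf{m}}c_{\mathbf{m}}\prod_{i=1}^l[t_i]_{m_i}$ with $c_{\mathbf{m}}\in A$. Since $D_{t_j}$ is $A$-linear and acts trivially on the factors $[t_i]_{m_i}$ for $i\neq j$, and since the $r$-integral is $A$-linear and picks out the coefficient of $\prod_i[t_i]_{r_i}$, the question boils down to computing, for each $m$, the coefficient of $[t_j]_{r_j}$ in $D_{t_j}[t_j]_m$, call it $\alpha_{m,r_j}\in\Z$. Then
\[
\{D_{t_j}f\}_r^{t_1,\dots,t_l}=N_r\sum_{m\ge 0}c_{r_1,\dots,r_{j-1},m,r_{j+1},\dots,r_l}\,\alpha_{m,r_j},
\]
so it is enough to show $N_r\alpha_{m,r_j}\equiv 0\pmod N$ for every $m\ge 0$.

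To compute $\alpha_{m,r_j}$, first I would use \eqref{diff on basis} to write $D_{t_j}[t_j]_m=m\kappa[t_j-\kappa]_{m-1}$, and then expand $[t_j-\kappa]_{m-1}$ in the basis $\{[t_j]_s\}$ via the Newton binomial formula \eqref{Newton} with $z=-\kappa$:
\[
[t_j-\kappa]_{m-1}=\sum_{s=0}^{m-1}\binom{m-1}{s}[t_j]_s\,[-\kappa]_{m-1-s}.
\]
Since $[-\kappa]_{m-1-s}=\prod_{i=0}^{m-2-s}(-\kappa-i\kappa)=(-1)^{m-1-s}(m-1-s)!\,\kappa^{m-1-s}$, combining gives
\[
\alpha_{m,r_j}=\begin{cases}\dfrac{m!}{r_j!}\,(-1)^{m-1-r_j}\kappa^{m-r_j},& m>r_j,\\[2pt]0,& m\le r_j.\end{cases}
\]

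Finally, for $m>r_j$ the integer $m!/r_j!=(r_j+1)(r_j+2)\cdots m$ is divisible by $r_j+1$ and $\kappa^{m-r_j}$ contains at least one factor of $\kappa$, so $\alpha_{m,r_j}$ is an integer multiple of $(r_j+1)\kappa$. By definition of $N_r$ we have $N_r(r_j+1)\equiv 0\pmod N$, hence $N_r\alpha_{m,r_j}\equiv 0\pmod N$ for every $m$, and the displayed sum above is $\equiv 0\pmod N$. The main step, and the only nontrivial observation, is the divisibility by $(r_j+1)\kappa$ of the single-variable coefficient $\alpha_{m,r_j}$; the rest is bookkeeping and does not require any new input beyond the two earlier identities \eqref{Newton} and \eqref{diff on basis}.
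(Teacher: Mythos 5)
Your proof is correct and follows essentially the same route as the paper: expand $f$ in the string basis, apply \eqref{diff on basis}, and observe that the resulting coefficient of $[t_j]_{r_j}$ is a multiple of $r_j+1$, hence killed by $N_r$ modulo $N$. The only difference is that you make explicit (via \eqref{Newton}) the coefficient $\alpha_{m,r_j}=\frac{m!}{r_j!}(-1)^{m-1-r_j}\kappa^{m-r_j}$, whereas the paper simply asserts its divisibility by $r_j+1$.
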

\begin{proof}
Write $f$ in the $A$-basis $\prod_{i=1}^l[t_i]_{m_i}$ and apply $D_{t_j}$ using \eqref{diff on basis}. That gives $D_{t_j}f$ 
in the same basis, and the coefficient in $D_{t_j}f$ of each $[t_j]_{r_j}$ is a multiple of $r_j+1$. 
Therefore, it is zero modulo $N$ when multiplied by $N_r$.
\end{proof}

\subsection{The module of $t$-periods}

In this section we  discuss the set of all difference $r$-integrals of a polynomial $f(t,z)\in\Z[t,z]$.
For simplicity, we consider the case when $t$ and $z$ are single variables.
The results can be extended to the case when $t=(t_1,\dots,t_l)$, $z=(z_1,\dots, z_n)$.
At the end of this section we formulate a remark on  an analog of Fubini's theorem  for the difference $r$-integrals.

\smallskip 

Given a polynomial
$f(t,z)=\sum_{r=0}^m c_r(z)[t]_r\in\Z[t,z]$, we consider the following set of polynomials in 
$z$ with integer coefficients. The set is denoted by $\{f(t,z)\}^t$ and consists of all polynomials of the form
\bea
\sum_{r=1}^m g_r(z)N_rc_r(z) = \sum_{r=1}^m g_r(z)\{f(t,z)\}^t_r\,,
\eea
where each $g_r(z)$ is an arbitrary polynomial such that $N_rg_r(z-\kappa)\equiv N_rg_r(z) \pmod N$, 
in other words, each  $g_r(z)$ is an arbitrary $M_r$-constant.
The set $\{f(t,z)\}^t$  is called the {\it module of $t$-periods of the polynomial $f(t,z)$ relative to the variable $t$ 
and the integer $N$.}  The module is the object of study in this section.
 
 \vsk.2>
The reason for this concept is the following observation.
If a polynomial $f(t,z)$ satisfies a difference equation up to a $t$-derivative:
\bea
f(t,z-\kappa)-A(z)f(t,z)\equiv D_t(h(t,z)) \pmod N,
\eea
 then all polynomials $u(z)\in\{f(t,z)\}^t$ are solutions of the  equation  
 \bea
 u(z-\kappa)\equiv A(z)u(z) \pmod N.
\eea

We have $\{f(t,z)\}^t\subset \Z[z]$, but we are really interested in the projection of  $\{f(t,z)\}^t$ to $\Z/N\Z[z]$. 
The image of  $\{f(t,z)\}^t\}$ in $\Z/N\Z[z]$ is a $\Z/N\Z$-module.

\smallskip 

The definition of the module of $t$-periods uses the $A$-basis  $\{[t]_m,\ m\geq 0\}$. The next proposition says that one can equivalently use the $A$-basis $\{[t+n_1z+n_2]_m,\ m\geq 0\}$ for any fixed
$n_1,n_2\in\Z$ without changing the module of $t$-periods.

\begin{prop}\label{shift and int}

Let $n_1,n_2\in\Z$ and
 \bea
\sum_{r=0}^m c_r(z)[t]_r\equiv \sum_{r=0}^m b_r(z)[t+n_1z+n_2]_r \pmod{N},
\eea 
where $c_r(z),b_r(z)\in\Z[z]$. Then the following two sets are equal: 
\bean\label{two sets}
&&
\left\{\sum_{r=0}^m g_r(z)N_rc_r(z)
\mid \on{each}\  g_r(z)  {\rm{\ is\ an\ arbitrary}} \ M_r\on{-constant}
\right\}
\\
&&
\phantom{aaaa}
\notag
= \left\{\sum_{r=0}^m g_r(z)N_rb_r(z)
\mid \on{each}\  g_r(z) {\rm{\ is\ an\ arbitrary\ }} M_r\on{-constant}
\right\} .
\eean
\end{prop}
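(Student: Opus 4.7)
The plan is to prove both inclusions of \eqref{two sets} modulo $N$. By symmetry, swapping the roles of $c$ and $b$ and replacing $(n_1,n_2)$ by $(-n_1,-n_2)$, it suffices to establish that the first set is contained in the second. Setting $w := n_1 z + n_2$ and applying the Newton binomial formula \eqref{Newton} to $[t+w]_r$, I would equate coefficients of $[t]_i$ in the two congruent expansions of $f$ to obtain
\[
c_i(z) \equiv \sum_{r\ge i}\binom{r}{i}\, b_r(z)\,[w]_{r-i} \pmod{N}.
\]
For a typical generator $g(z) N_i c_i(z)$ of the first set (with $g$ an $M_i$-constant), substitution and reordering of summations give
\[
g(z)\, N_i\, c_i(z) \equiv \sum_{r\ge i} N_r\, h_r(z)\, b_r(z) \pmod{N}, \qquad h_r(z) := g(z)\, d_{r,i}\, [w]_{r-i},
\]
where $d_{r,i} := N_i\binom{r}{i}/N_r$. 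That $d_{r,i}\in\Z$ is a number-theoretic lemma $N_r\mid N_i\binom{r}{i}$ for $i\le r$, equivalent to $\gcd(i+1,N)\mid\gcd(r+1,N)\binom{r}{i}$; this follows from the identity $(i+1)\binom{r+1}{i+1}=(r+1)\binom{r}{i}$, which gives $(i+1)\mid(r+1)\binom{r}{i}$ and hence $(i+1)\mid\gcd\bigl((r+1)\binom{r}{i},\,N\binom{r}{i}\bigr)=\gcd(r+1,N)\binom{r}{i}$.

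The main task---and the central obstacle---is to verify that each coefficient $h_r$ is an $M_r$-constant modulo $N$, i.e.\ $h_r(z)-h_r(z-\ka)\equiv 0\pmod{M_r}$. Applying the Leibniz rule to $g(z)[w(z)]_{r-i}$ splits the difference into two pieces. The piece involving $g(z)-g(z-\ka)$ lies in $M_i\Z[z]$ since $g$ is $M_i$-constant, and the identity
\[
d_{r,i}\,M_i \;=\; N_i\binom{r}{i}M_i/N_r \;=\; N\binom{r}{i}/N_r \;=\; M_r\binom{r}{i}
\]
(using $N_iM_i=N=N_rM_r$) places the whole piece in $M_r\Z[z]$. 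For the piece involving $[w(z)]_{r-i}-[w(z-\ka)]_{r-i}$, a telescoping argument based on $D_u[u]_k=k\ka[u-\ka]_{k-1}$ (carried out separately for $n_1> 0$, $n_1=0$, and $n_1<0$) writes this difference as $\pm(r-i)\ka$ times a polynomial in $\Z[z]$, reducing the divisibility to showing $M_r\mid d_{r,i}(r-i)$. Using $(r-i)\binom{r}{i}=(i+1)\binom{r}{i+1}$,
\[
d_{r,i}(r-i) \;=\; N_i(i+1)\binom{r}{i+1}/N_r \;=\; M_r\cdot\frac{i+1}{M_i}\binom{r}{i+1}\in M_r\Z,
\]
since $M_i = \gcd(i+1,N)$ divides $i+1$. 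Combining, $h_r$ is $M_r$-constant modulo $N$, so $g(z)N_i c_i(z)$ lies in the second set modulo $N$; summing over $i$ and over choices of $g$ completes the inclusion.

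The principal difficulty lies in isolating the two key algebraic identities $d_{r,i}M_i = M_r\binom{r}{i}$ and $d_{r,i}(r-i)\in M_r\Z$; these are the only places where the arithmetic of $M_r$ and $N_r$ enters the argument, and once recognised they reduce the periodicity check for $h_r$ to routine binomial manipulation.
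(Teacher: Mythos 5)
Your proof is correct in substance and arrives at the same two key claims as the paper's own argument --- that $d_{r,i}=N_i\binom{r}{i}/N_r$ is an integer and that the transported coefficient $h_r=g\,d_{r,i}[w]_{r-i}$ (the paper's $\tilde g_r$) is an $M_r$-constant --- but you establish them by a genuinely different route. The paper reduces both claims to the case $N=p^a$, writes $r+1=p^bx$ and $i+1=p^cy$, splits into the cases $c\le b$ and $b<c$, and invokes the carry/valuation theorem for $\binom{r}{i}$ in base $p$; it also only treats $(n_1,n_2)=(1,0)$ explicitly and declares the general case similar. You instead use the two exact identities $(i+1)\binom{r+1}{i+1}=(r+1)\binom{r}{i}$ and $(r-i)\binom{r}{i}=(i+1)\binom{r}{i+1}$, together with a Leibniz--telescoping split of $h_r(z)-h_r(z-\ka)$ into a piece controlled by $M_i\mid g(z)-g(z-\ka)$ and a piece controlled by $M_r\mid d_{r,i}(r-i)$. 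This handles arbitrary $n_1,n_2$ uniformly and avoids the prime-power reduction entirely; what the paper's approach buys is direct access to the $p$-adic valuations, what yours buys is a case-free, purely combinatorial argument.

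One small repair is needed in your proof that $N_r\mid N_i\binom{r}{i}$. From $(i+1)\mid(r+1)\binom{r}{i}$ you conclude $(i+1)\mid\gcd\bigl((r+1)\binom{r}{i},\,N\binom{r}{i}\bigr)$; this intermediate statement is false in general, since $i+1$ need not divide $N\binom{r}{i}$ (take $N=3$, $i=8$, $r=17$: then $9\nmid 3\binom{17}{8}$). But the statement you actually need, $\gcd(i+1,N)\mid\gcd(r+1,N)\binom{r}{i}$, follows by the same reasoning applied to $\gcd(i+1,N)$ in place of $i+1$: it divides $i+1$ and hence $(r+1)\binom{r}{i}$, and it divides $N$ and hence $N\binom{r}{i}$, so it divides $\gcd\bigl((r+1)\binom{r}{i},\,N\binom{r}{i}\bigr)=\gcd(r+1,N)\binom{r}{i}$. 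With that one-line correction the argument is complete.
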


\begin{proof}
We give the proof for the case of $n_2=0,n_1=1$. The case $n_2=0$ and non-zero $n_1$ is the same. The case $n_1=0$ is similar (and simpler). The general case if is the combination of the previous two statements.

By \eqref{Newton},
\bea
b_r(z)[t+z]_r=b_r(z)\sum_{i=0}^r {r\choose i} [t]_i[z]_{r-i}.
\eea
We claim that if polynomial $g_i(z)\in\Z[z]$ is an $M_i$-constant, then the polynomial 
\bea
\tilde g_r(z) ={r\choose i}\frac{N_i}{N_r}\, [z]_{r-i} g_i(z)
\eea 
has integer coefficients and, moreover, it is an $M_r$-constant.

It is enough to prove this for the case $N=p^a$
 where $p$ is prime. Let $r+1=p^bx$, $i+1=p^cy$, where $x,y$ are integers not divisible by $p$. 

Let us consider $b\leq a$, $c\leq a$, the other cases are similar. Then $N_r=p^{a-b}$, $N_i=p^{b-c}$. If 
$c\leq b$, then $r-i$ is divisible by $p^c$ and $[z]_{r-i}g_iN_i/N_r$ is an $M_r$-constant since $iN_i/N_r$ is divisible by $p^b$. If $b<c$ then we use ${r \choose i}$. It is well known that a binomial coefficient is divisible by $p^d$ where $d$ is the number of moving a digit when the difference $r-i$ is computed in the base $p$ system. It follows that ${r \choose i}$ is divisible by $p^{c-b}$ and the claim follows.

Thus  $g_i(z)N_ic_i(z)=g_i(z)N_ib_i(z)+\sum_{r=i+1}^n b_r(z) N_r \tilde g_r$ where $\tilde g_r$ are $M_r$-constants. It follows that the 
left-hand side set in \eqref{two sets} is inside of the right set. 
Doing the change  $t\to t-z$ and repeating the argument, we obtain the other inclusion.
\end{proof}

Thus, a difference $r$-integral of $f(t,z)$ with respect to $t$ may change, if the $A$-basis is shifted by a linear polynomial in 
$z$,  but the module of $t$-periods does not.

Another statement in the same spirit is the following proposition.

\begin{prop}
\label{per and int}

For any integers $n_1,n_2,n_3$, with $n_3>0$, and any polynomial $f(t,z)\in\Z[t,z]$, we have
\bea
\{f(t,z)[t+n_1 z+n_2]_{n_3N} \}^t\equiv \{f(t,z) \}^t \pmod{N}.
\eea 
\end{prop}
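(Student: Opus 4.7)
The plan is to compute both modules of $t$-periods appearing in the statement with respect to a common basis of $\Z[z][t]$, and to reduce the proposition to a simple shift identity for Pochhammer polynomials modulo $N$. Set $u := t + n_1 z + n_2$. Proposition \ref{shift and int}, applied separately to $f(t,z)$ and to $f(t,z)\,[u]_{n_3 N}$, allows us to compute both modules $\{f\}^t$ and $\{f \cdot [u]_{n_3 N}\}^t$ in the shifted basis $\{[u]_m\}_{m \geq 0}$ without changing them modulo $N$.

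I would then expand $f(t,z) = \sum_{r \geq 0} b_r(z)\,[u]_r$ in this basis and aim to prove the key identity
\begin{equation*}
[u]_r \cdot [u]_{n_3 N} \;\equiv\; [u]_{r + n_3 N} \pmod{N}, \qquad r \geq 0,
\end{equation*}
by induction on $r$. The base case $r=0$ is trivial. For the inductive step I would use the elementary identity $u \cdot [u]_m = [u]_{m+1} + m\kappa\,[u]_m$ (which follows from $[u]_{m+1} = (u - m\kappa)[u]_m$) to compute
\begin{equation*}
[u]_{r+1}\,[u]_{n_3 N} \;=\; (u - r\kappa)\,[u]_r\,[u]_{n_3 N} \;\equiv\; (u - r\kappa)\,[u]_{r + n_3 N} \;=\; [u]_{r + n_3 N + 1} + n_3 N \kappa\,[u]_{r + n_3 N},
\end{equation*}
whose second term vanishes modulo $N$ because of the factor $n_3 N$. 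Locating and verifying this identity is the only real step of the argument; everything afterwards is formal bookkeeping.

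From the identity it follows that $f(t,z)\,[u]_{n_3 N} \equiv \sum_r b_r(z)\,[u]_{r + n_3 N} \pmod{N}$, so the $\{[u]_s\}$-coefficient of $f\,[u]_{n_3 N}$ equals $b_{s - n_3 N}(z)$ for $s \geq n_3 N$ and vanishes otherwise. Finally I will observe that $r + n_3 N + 1 \equiv r + 1 \pmod{N}$ implies $\gcd(r + n_3 N + 1,N) = \gcd(r+1,N)$, so $N_{r + n_3 N} = N_r$ and $M_{r + n_3 N} = M_r$. Hence the module of $t$-periods of $f\,[u]_{n_3 N}$, namely the sums $\sum_{s \geq n_3 N} g_s(z)\,N_s\,b_{s - n_3 N}(z)$ over $M_s$-constants $g_s$, matches via the substitution $s = r + n_3 N$ (with $\tilde g_r := g_{r + n_3 N}$) the set of sums $\sum_{r \geq 0} \tilde g_r(z)\,N_r\,b_r(z)$ over $M_r$-constants $\tilde g_r$, which is precisely $\{f\}^t$ computed in the $\{[u]_m\}$-basis. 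The desired congruence follows.
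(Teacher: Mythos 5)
Your proof is correct, but it takes a genuinely different route from the paper's. The paper stays in the basis $\{[t]_m\}$ throughout: it writes $[t]_r\,[t+z]_N \equiv [t]_r\,[(t-r\kappa)+z]_N \pmod{N}$ (using that a string of length $N$ is an $N$-constant), expands by the Newton formula \eqref{Newton} into $\sum_{i=0}^N\binom{N}{i}[t]_{r+i}[z]_{N-i}$, and then must show that all the cross-terms remain inside the module of periods; this requires the divisibility claim that $\binom{N}{i}\tfrac{N_{r+i}}{N_r}[z]_{N-i}g_r(z)$ is an $M_{r+i}$-constant, verified by a $p$-adic valuation computation for $N=p^a$, followed by a two-inclusion argument with a descending recursion on the index. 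You instead pass to the shifted basis $\{[u]_m\}$, $u=t+n_1z+n_2$, by invoking Proposition \ref{shift and int}; there the multiplier $[u]_{n_3N}$ is itself a basis string, the whole product collapses to the single identity $[u]_r\,[u]_{n_3N}\equiv[u]_{r+n_3N}\pmod{N}$ (your induction is fine, and one can also see it in one line from $[u]_{r+n_3N}=[u]_r\,[u-r\kappa]_{n_3N}$ together with the $N$-constancy of the length-$n_3N$ string), and the conclusion reduces to the elementary observation that $N_{r+n_3N}=N_r$ and $M_{r+n_3N}=M_r$. Your argument is cleaner and handles general $n_1,n_2,n_3$ uniformly, whereas the paper writes out only $n_1=n_3=1$, $n_2=0$ and declares the rest similar. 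The trade-off is that the binomial-coefficient arithmetic has not disappeared from your proof, it has merely been outsourced to Proposition \ref{shift and int}, whose own proof contains essentially the same $p$-adic analysis; the paper, by contrast, treats the two propositions as parallel and redoes that analysis for each.
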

\begin{proof}
The proof is similar to Proposition \ref{shift and int}. We treat the case $n_1=n_3=1$ and $n_2=0$. 

We have modulo $N$
\bea
[t]_r[t+z]_N \equiv [t]_r[(t-r\kappa)+z]_N=[t]_r\sum_{i=0}^N {N\choose i} [t-r\kappa]_{i}[z]_{N-i} 
=\sum_{i=0}^N {N\choose i}[t]_{r+i}[z]_{N-i}.
\eea 
The claim is that if $g_r(z)$ is an $M_r$-constant, then
\bea
\tilde g(z)={N \choose i}\frac{N_{r+i}}{N_r}[z]_{N-i}g_r(x)
\eea 
is an $M_{r+i}$-constant. This is checked for $N=p^a$ where $p$ is prime, by using 
the fact that if $i=p^cx$, where $x$ is an integer not divisible by $p$ and $c<a$, then ${N \choose i}$ is divisible by $p^{a-c}$.

Write $f(t,z)=\sum_{r=0}^m c_r(z)[t]_m$ and  $f(t,z)[t+z]_N=\sum_{r=0}^{N+n} b_r(z)[t]_r$. 
It follows from the claim that  for any $M_i$-constant $g_i(z)$, we have
$b_i(z)g_i(z)N_i=\sum_{r=i-N}^i c_r(z) N_r \tilde g_r(z)$
 where we set $c_r(z)=0$ for $r<0$ and where $\tilde g_r(z)$ are $M_r$-constants. Therefore,
 $\{f(t,z)[t+z]_N\}^t\subset \{f(t,z)\}^t$ modulo $N$.

Clearly $N_i=N_{i+N}$ and $g_i(z)$ is an $M_i$-constant if and only if $g_i(z)$ is an $M_{i+N}$-constant. 
Therefore, if $i\leq m$, then $b_{i+N}(z)g_i(z)N_{i+N}=c_i(z)N_ig_i(z)+\sum_{r=i+1}^{i+N} c_r(z) N_r \tilde g_r(z)$. 
Thus we obtain $c_i(z)N_ig_i(z)\in \{f(t,z)[t+z]_N\}^t$ recursively starting from $i=m$, then continuing to $i=m-1$, then to $m-2$,
 and so on. 
\end{proof}

We  use   Propositions  \ref{shift and int} and \ref{per and int}
 in Section \ref{main sec} to explain that the family of
 the modulo $N$  solutions of the \qKZ/ equations constructed in Theorem \ref{main thm}
 does not depend on various choices.

\begin{rem}
Consider an example.
Let $f(t_1,t_2,z)=p(z)[t_1]_2[t_2]_2$ and $N=4$. Then 
\bea
\{\{f\}_2^{t_1}\}^{t_2}_2=\{\{f\}_2^{t_2}\}^{t_1}_2=4p(z) \equiv 0\pmod{4}, \quad {\textrm {but}} \quad \{f\}_{2,2}^{t_1,t_2}=2p(z). 
\eea 
More generally, we have an  $N$-analog of Fubini's theorem:
\bean\label{Fubini}
&&
\{ \{ f(t_1,t_2,z)\}^{t_1}_{r_1}\}^{t_2}_{r_2}=
\{ \{ f(t_1,t_2,z) \}^{t_2}_{r_2}\}_{r_1}^{t_1}
\\
\notag
&&
\phantom{aaaaaa}
=
\frac{N_{r_1}N_{r_2}}{\on{gcd}(N_{r_1},N_{r_2})} \{f(t_1,t_2,z)\}^{t_1,t_2}_{r_1,r_2}
=\frac{N_{r_1}N_{r_2}}{\on{gcd}(N_{r_1},N_{r_2})}\{f(t_1,t_2,z)\}^{t_2,t_1}_{r_2,r_1}.
\eean

Note that the integer $N_{r_1,r_2}$ is a divisor of $N_{r_1}N_{r_2}$ and, in general, 
 is a proper divisor. Thus, in general, the double integration  gives more  difference $r$-integrals than the repeated integration.

\end{rem}

\section{Master function and weight functions}
\label{master and weight sec}

We define the main ingredients of
 the construction of the solutions of the \qKZ/ equation modulo $N$: the master polynomial and weight function.

\subsection{Master polynomial}
Recall integers $N,\kappa, k, k'$ defined in \eqref{C}.
We also have integers $n,l$ corresponding to the choice of the space $\Sing V^{\ox n}[n-2l]$.

Let again $z=(z_1,\dots,z_n), t=(t_1,\dots,t_l)$.
Define the master polynomial 
\bea
\Phi(t,z) = \prod_{a=1}^n\prod_{i=1}^l [t_i-z_a]_k \prod_{1\leq i<j\leq l} [t_i-t_j +1]_{k'}\,.
\eea
The zeroes of the master polynomial are shown on Figure \ref{master pic} for  $l=2$. 
The start of each string is indicated in blue. The string itself consist of $k$ or $k'$ zeroes of the master polynomial. The distance
 between neighboring zeroes is $-\kappa$ as shown in red in Figure \ref{master pic}.

Note that the master polynomial is symmetric in variables $z_1,\dots,z_n$. 

As explained in the introduction, the master polynomial is obtained from the master function used in 
 the case of complex-valued solutions. This master function originally is a ratio of products of gamma 
 functions, see Section \ref{app 2 sec}, which reduces to a polynomial under our assumptions.

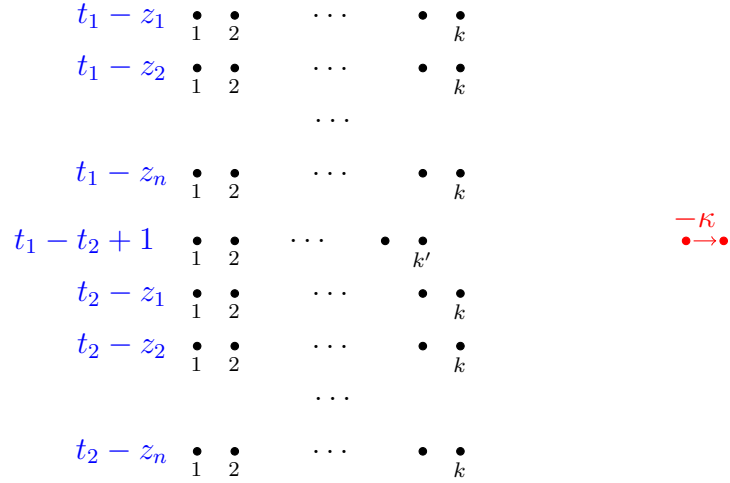
\begin{figure}
\begin{tikzpicture}
\draw[fill, red ] (1,0.7) circle [radius=0.05];
\draw[fill, red ] (1.5,0.7) circle [radius=0.05];
\draw[red, ->] (1.1,0.7) -- node[above] {$-\ka\ \ $} (1.4,0.7) ; 
\draw[fill] (-2,3.7) circle [radius=0.05] node[below] {${}_{k}$};
\draw[fill] (-2.5,3.7) circle [radius=0.05];
\node at (-3.7,3.7) {$\dots$};
\draw[fill] (-5,3.7) circle [radius=0.05] node[below] {${}_{2}$};
\draw[fill] (-5.5,3.7) circle [radius=0.05] node[below] {${}_{1}$};
\node[blue] at (-6.5,3.7) {$t_1-z_1$};

\draw[fill] (-2,3) circle [radius=0.05] node[below] {${}_{k}$};
\draw[fill] (-2.5,3) circle [radius=0.05];
\node at (-3.7,3) {$\dots$};
\draw[fill] (-5,3) circle [radius=0.05] node[below] {${}_{2}$};
\draw[fill] (-5.5,3) circle [radius=0.05] node[below] {${}_{1}$};
\node[blue] at (-6.5,3) {$t_1-z_2$};

\node at (-3.7,2.3) {$\Large{\dots}$};

\draw[fill] (-2,1.6) circle [radius=0.05] node[below] {${}_{k}$};
\draw[fill] (-2.5,1.6) circle [radius=0.05];
\node at (-3.7,1.6) {$\dots$};
\draw[fill] (-5,1.6) circle [radius=0.05] node[below] {${}_{2}$};
\draw[fill] (-5.5,1.6) circle [radius=0.05] node[below] {${}_{1}$};
\node[blue] at (-6.5, 1.6) {$t_1-z_n$};

\draw[fill] (-2.5,0.7) circle [radius=0.05] node[below] {${}_{k'}$};
\draw[fill] (-3,0.7) circle [radius=0.05];
\node at (-4,0.7) {$\dots$};
\draw[fill] (-5,0.7) circle [radius=0.05] node[below] {${}_{2}$};
\draw[fill] (-5.5,0.7) circle [radius=0.05] node[below] {${}_{1}$};
\node[blue] at (-7,0.7) {$t_1-t_2+1$};

\draw[fill] (-2,0) circle [radius=0.05] node[below] {${}_{k}$};
\draw[fill] (-2.5,0) circle [radius=0.05];
\node at (-3.7,0) {$\dots$};
\draw[fill] (-5,0) circle [radius=0.05] node[below] {${}_{2}$};
\draw[fill] (-5.5,0) circle [radius=0.05] node[below] {${}_{1}$};
\node[blue] at (-6.5,0) {$t_2-z_1$};

\draw[fill] (-2,-0.7) circle [radius=0.05] node[below] {${}_{k}$};
\draw[fill] (-2.5,-0.7) circle [radius=0.05];
\node at (-3.7,-0.7) {$\dots$};
\draw[fill] (-5,-0.7) circle [radius=0.05] node[below] {${}_{2}$};
\draw[fill] (-5.5,-0.7) circle [radius=0.05] node[below] {${}_{1}$};
\node[blue] at (-6.5,-0.7) {$t_2-z_2$};

\node at (-3.7,-1.4) {$\Large{\dots}$};

\draw[fill] (-2,-2.1) circle [radius=0.05] node[below] {${}_{k}$};
\draw[fill] (-2.5,-2.1) circle [radius=0.05];
\node at (-3.7,-2.1) {$\dots$};
\draw[fill] (-5,-2.1) circle [radius=0.05] node[below] {${}_{2}$};
\draw[fill] (-5.5,-2.1) circle [radius=0.05] node[below] {${}_{1}$};
\node[blue] at (-6.5,-2.1) {$t_2-z_n$};

\end{tikzpicture}\caption{Zeroes of the master polynomial $\Phi(t_1,t_2,z_1,\dots,z_n)$.}\label{master pic}
\end{figure}

\subsection{Symmetrization modulo $N$}
In the case of complex-valued solutions, one uses an action of a symmetric group $S_l$ on functions of $t_1,\dots,t_l$ given by the formula
\bea
(\tau\circ f)(t_1,\dots,t_l) =
f(t_{\tau(1)} ,\dots, t_{\tau(l)}) \prod_{i<j,\,\tau(i)>\tau(j)}  \frac{t_i-t_{j}-1}{t_i-t_{j}+1},
\eea
for any $\tau\in S_l$. We modify it as follows. 

For $\tau\in S_l$  define
\bea
(\tau f)(t_1,\dots,t_l) =
f(t_{\tau(1)} ,\dots, t_{\tau(l)}) \prod_{i<j,\,\tau(i)>\tau(j)}  \frac{t_i-t_{j}+1-\ka k'}{t_i-t_{j}+1}.
\eea
Note that this formula defines an action of $S_l$ modulo $N$ since $1-\ka k'\equiv -1 \pmod N$.

We also define symmetrization modulo $N$:
\bea
\Sym_t f(t) = \sum_{\tau\in S_l} (\tau f)(t)\,.
\eea

\subsection{Weight function.}
In the case of complex-valued solutions, the coordinates of the weight function are defined as symmetrizations of
the functions
\bea
\prod_{i=1}^r \frac {1}{t_i-z_{a_i}} \prod_{l=1}^{a_i-1}\frac {t_i-z_{l}+1}{t_i-z_{l}}.
\eea
We modify it as follows.

For $I=\{1\leq a_1<\dots<a_l\leq n\}\in \mc I_l$,
define the coordinates of the weight function
\bea
w_I(t,z) &=& \Sym_t\left(
\prod_{i=1}^l \frac {1}{t_i-z_{a_i}} \prod_{j=1}^{a_i-1}\frac {t_i-z_{j}-\ka k}{t_i-z_{j}}\right).
\eea
Since $k\ka\equiv -1 \pmod N$,  and $\Sym_t$ coincides modulo $N$ with the previous symmetrization, this formula coincides with the formula used in the complex-valued case modulo $N$.

It is convenient to denote
\bea
w_I^\tau(t,z)=\tau \left(\prod_{i=1}^l \frac {1}{t_i-z_{a_i}} \prod_{j=1}^{a_i-1}\frac {t_i-z_{j}-\ka k}{t_i-z_{j}}\right).
\eea
Then  $w_I(t,z)=\sum_{\tau\in S_l}w_I^\tau(t,z)$.

We also denote
\bea 
U_I(t,z)=\Phi(t,z) w_I(t,z), \qquad U_I^\tau(t,z)=\Phi(t,z) w_I^\tau (t,z).
\eea

Then we have a lemma saying that the summands $U_I^\tau(t,z)$ are polynomials organized in products of strings.

\begin{lem}
For $\tau\in S_l$, the function $U_I^\tau(t,z)$
 is a polynomial which is a product of $\binom{l}{2}$ strings of length $k'$,
of $l(n-1)$ strings of length $k$,  and of $l$ strings of length $k-1$,

\bean
\label{pr}
&&
U_I^\tau(t,z) = \prod_{ i<j, \,\tau(i)<\tau(j)} [t_i-t_j +1]_{k'}
\prod_{ i<j, \,\tau(i)>\tau(j)} [t_i-t_j +1-\ka]_{k'}
\\
\notag
&&
\times \prod_{i=1}^l \Big(\prod_{s=1}^{a_i-1} [t_{\tau(i)}-z_s-\ka]_k \Big) [t_{\tau(i)} -z_{a_i}-\ka]_{k-1}
\Big(\prod_{s=a_i+1}^n [t_{\tau(i)}-z_s]_k \Big) .
\eean
\end{lem}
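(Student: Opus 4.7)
\medskip

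\textbf{Proof plan.} The claim is purely algebraic: by definition $U_I^\tau = \Phi\cdot w_I^\tau$, so one needs to multiply the product decomposition of $\Phi$ by the factors arising from the action of $\tau$ and show that all poles cancel, leaving the stated product of Pochhammer polynomials. My plan is to verify this by splitting $\Phi$ into two pieces and matching each piece against the corresponding piece of $w_I^\tau$.

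First I would isolate the three elementary string identities that make everything work:
\begin{align*}
[x]_k\cdot\frac{x-\ka k}{x}&=[x-\ka]_k, \\
[x]_{k'}\cdot\frac{x-\ka k'}{x}&=[x-\ka]_{k'}, \\
[x]_k\cdot\frac{1}{x}&=[x-\ka]_{k-1}.
\end{align*}
Each follows immediately by cancelling the first factor of one Pochhammer symbol and inserting the last factor of the other (using $[x]_m=\prod_{r=0}^{m-1}(x-r\ka)$). These are the only identities needed.

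Next, I would perform the computation in two independent steps. \emph{Step 1 (the $t$-$t$ part):} For each pair $i<j$ with $\tau(i)>\tau(j)$, take the factor $[t_i-t_j+1]_{k'}$ from $\Phi$ and multiply it by the factor $\frac{t_i-t_j+1-\ka k'}{t_i-t_j+1}$ appearing in $w_I^\tau$ from the definition of $\tau f$; by the second identity above this yields $[t_i-t_j+1-\ka]_{k'}$, while for pairs with $\tau(i)<\tau(j)$ the factor from $\Phi$ is unchanged. This produces the first line of \eqref{pr}. \emph{Step 2 (the $t$-$z$ part):} Reindex the $\Phi$-product as $\prod_{a,i}[t_{\tau(i)}-z_a]_k$ (harmless since $\tau$ is a bijection on $\{1,\dots,l\}$), and for each fixed $i$ combine with the $i$-th factor of $\tau\bigl(\prod_i \frac{1}{t_i-z_{a_i}}\prod_{j<a_i}\frac{t_i-z_j-\ka k}{t_i-z_j}\bigr)$, namely $\frac{1}{t_{\tau(i)}-z_{a_i}}\prod_{j<a_i}\frac{t_{\tau(i)}-z_j-\ka k}{t_{\tau(i)}-z_j}$. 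Applying the first identity to each $a=s<a_i$, the third identity to $a=a_i$, and leaving the remaining $a=s>a_i$ factors untouched yields the second line of \eqref{pr}.

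The two steps are independent because Step 1 only modifies $(t_i-t_j)$-type factors while Step 2 only modifies $(t_i-z_a)$-type factors, and they account for every factor in $\Phi\cdot w_I^\tau$. I do not anticipate any real obstacle; the only bookkeeping point to be careful about is the reindexing $i\mapsto\tau(i)$ in Step 2, which replaces the variables $t_i$ in the unsymmetrized expression by $t_{\tau(i)}$ without changing the overall product $\prod_{a,i}[t_{\tau(i)}-z_a]_k$ coming from $\Phi$.
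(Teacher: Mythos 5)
Your proposal is correct and follows the same route as the paper: the paper's proof simply asserts that the poles of $w_I^\tau$ cancel the starting factors of the strings in $\Phi$ while the zeroes extend those strings (and points to Figure \ref{weight pic}), which is exactly the cancellation you verify explicitly via the identities $[x]_m\cdot\frac{x-\ka m}{x}=[x-\ka]_m$ and $[x]_k\cdot\frac1x=[x-\ka]_{k-1}$. Your factor-by-factor bookkeeping, including the reindexing $i\mapsto\tau(i)$ in the $t$-$z$ part, is a valid and more detailed write-up of the same argument.
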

\begin{proof}
It is easy to see that the poles of $w_I^\tau$ cancel with the starts of the
 strings existing in the master polynomial, while the zeroes append those strings.
 
Examples of $U_I^\tau(t,z)$ are shown on Figure \ref{weight pic}. In that picture $l=2$ and $I=\{1,2\}$. Zeroes 
of $U_I^{\id}(t,z)$ are pictured on the left and zeroes of $U_I^{(1,2)}(t,z)$ on the right. The cancelled zeroes of the master polynomial are pictured as red circles. The new zeroes are shown in red bullets.
\end{proof}

\begin{figure}
\begin{tikzpicture}

\draw[fill, red] (7,3.7) circle [radius=0.05] node[below] {${}_{k+1}$};
\draw[fill] (6.5,3.7) circle [radius=0.05] node[below] {${}_{k}$};
\draw[fill] (6,3.7) circle [radius=0.05];
\node at (4.8,3.7) {$\dots$};
\draw[fill] (3.5,3.7) circle [radius=0.05] node[below] {${}_{2}$};
\draw[red] (3,3.7) circle [radius=0.05] node[below] {${}_{1}$};
\node[blue] at (2,3.7) {$t_1-z_1$};

\draw[fill] (6.5,3) circle [radius=0.05] node[below] {${}_{k}$};
\draw[fill] (6,3) circle [radius=0.05];
\node at (4.8,3) {$\dots$};
\draw[fill] (3.5,3) circle [radius=0.05] node[below] {${}_{2}$};
\draw (3,3) circle [radius=0.05] node[below] {${}_{1}$};
\node[blue]  at (2,3) {$t_1-z_2$};

\node[thick] at (4.8,2.3) {$\dots$};

\draw[fill] (6.5,1.6) circle [radius=0.05] node[below] {${}_{k}$};
\draw[fill] (6,1.6) circle [radius=0.05];
\node at (4.8,1.6) {$\dots$};
\draw[fill] (3.5,1.6) circle [radius=0.05] node[below] {${}_{2}$};
\draw[fill] (3,1.6) circle [radius=0.05] node[below] {${}_{1}$};
\node[blue]  at (2, 1.6) {$t_1-z_n$};

\draw[fill, red] (6.5,0.7) circle [radius=0.05] node[below] {${}_{k'+1}$};
\draw[fill] (6,0.7) circle [radius=0.05] node[below] {${}_{k'}$};
\draw[fill] (5.5,0.7) circle [radius=0.05];
\node at (4.5,0.7) {$\dots$};
\draw[fill] (3.5,0.7) circle [radius=0.05] node[below] {${}_{2}$};
\draw[red] (3,0.7) circle [radius=0.05] node[below] {${}_{1}$};
\node[blue]  at (1.5,0.7) {$t_1-t_2+1$};

\draw[fill] (6.5,0) circle [radius=0.05] node[below] {${}_{k}$};
\draw[fill] (6,0) circle [radius=0.05];
\node at (4.8,0) {$\dots$};
\draw[fill] (3.5,0) circle [radius=0.05] node[below] {${}_{2}$};
\draw[red] (3,0) circle [radius=0.05] node[below] {${}_{1}$};
\node[blue]  at (2,0) {$t_2-z_1$};

\draw[fill] (6.5,-0.7) circle [radius=0.05] node[below] {${}_{k}$};
\draw[fill] (6,-0.7) circle [radius=0.05];
\node at (4.8,-0.7) {$\dots$};
\draw[fill] (3.5,-0.7) circle [radius=0.05] node[below] {${}_{2}$};
\draw[fill] (3,-0.7) circle [radius=0.05] node[below] {${}_{1}$};
\node[blue]  at (2,-0.7) {$t_2-z_2$};

\node[thick] at (4.8,-1.4) {$\dots$};

\draw[fill] (6.5,-2.1) circle [radius=0.05] node[below] {${}_{k}$};
\draw[fill] (6,-2.1) circle [radius=0.05];
\node at (4.8,-2.1) {$\dots$};
\draw[fill] (3.5,-2.1) circle [radius=0.05] node[below] {${}_{2}$};
\draw[fill] (3,-2.1) circle [radius=0.05] node[below] {${}_{1}$};
\node[blue]  at (2,-2.1) {$t_2-z_n$};

\draw[fill] (-2,3.7) circle [radius=0.05] node[below] {${}_{k}$};
\draw[fill] (-2.5,3.7) circle [radius=0.05];
\node at (-3.7,3.7) {$\dots$};
\draw[fill] (-5,3.7) circle [radius=0.05] node[below] {${}_{2}$};
\draw[red] (-5.5,3.7) circle [radius=0.05] node[below] {${}_{1}$};
\node[blue]  at (-6.5,3.7) {$t_1-z_1$};

\draw[fill] (-2,3) circle [radius=0.05] node[below] {${}_{k}$};
\draw[fill] (-2.5,3) circle [radius=0.05];
\node at (-3.7,3) {$\dots$};
\draw[fill] (-5,3) circle [radius=0.05] node[below] {${}_{2}$};
\draw[fill] (-5.5,3) circle [radius=0.05] node[below] {${}_{1}$};
\node[blue]  at (-6.5,3) {$t_1-z_2$};

\node[thick] at (-3.7,2.3) {$\Large{\dots}$};

\draw[fill] (-2,1.6) circle [radius=0.05] node[below] {${}_{k}$};
\draw[fill] (-2.5,1.6) circle [radius=0.05];
\node at (-3.7,1.6) {$\dots$};
\draw[fill] (-5,1.6) circle [radius=0.05] node[below] {${}_{2}$};
\draw[fill] (-5.5,1.6) circle [radius=0.05] node[below] {${}_{1}$};
\node[blue]  at (-6.5, 1.6) {$t_1-z_n$};

\draw[fill] (-2.5,0.7) circle [radius=0.05] node[below] {${}_{k'}$};
\draw[fill] (-3,0.7) circle [radius=0.05];
\node at (-4,0.7) {$\dots$};
\draw[fill] (-5,0.7) circle [radius=0.05] node[below] {${}_{2}$};
\draw[fill] (-5.5,0.7) circle [radius=0.05] node[below] {${}_{1}$};
\node[blue]  at (-7,0.7) {$t_1-t_2+1$};

\draw[fill,red] (-1.5,0) circle [radius=0.05] node[below] {${}_{k+1}$};
\draw[fill] (-2,0) circle [radius=0.05] node[below] {${}_{k}$};
\draw[fill] (-2.5,0) circle [radius=0.05];
\node at (-3.7,0) {$\dots$};
\draw[fill] (-5,0) circle [radius=0.05] node[below] {${}_{2}$};
\draw[red] (-5.5,0) circle [radius=0.05] node[below] {${}_{1}$};
\node[blue]  at (-6.5,0) {$t_2-z_1$};

\draw[fill] (-2,-0.7) circle [radius=0.05] node[below] {${}_{k}$};
\draw[fill] (-2.5,-0.7) circle [radius=0.05];
\node at (-3.7,-0.7) {$\dots$};
\draw[fill] (-5,-0.7) circle [radius=0.05] node[below] {${}_{2}$};
\draw[red] (-5.5,-0.7) circle [radius=0.05] node[below] {${}_{1}$};
\node[blue]  at (-6.5,-0.7) {$t_2-z_2$};

\node at (-3.7,-1.4) {$\Large{\dots}$};

\draw[fill] (-2,-2.1) circle [radius=0.05] node[below] {${}_{k}$};
\draw[fill] (-2.5,-2.1) circle [radius=0.05];
\node at (-3.7,-2.1) {$\dots$};
\draw[fill] (-5,-2.1) circle [radius=0.05] node[below] {${}_{2}$};
\draw[fill] (-5.5,-2.1) circle [radius=0.05] node[below] {${}_{1}$};
\node[blue]  at (-6.5,-2.1) {$t_2-z_n$};

\end{tikzpicture}\caption{Zeroes of polynomials $U_{1,2}^{\id}(t,z)$ and $U_{1,2}^{(1,2)}(t,z)$.}\label{weight pic}
\end{figure}

The polynomials $U_I(t,z)$ are coordinates of the integrand which will be used to construct solutions of the $qKZ$ equations. Now we pass to the integrand itself. 

Define the polynomial $   U(t,z)$ with values in $V^{\otimes n}[n-2l]$
by the formula
\beq\label{U}
   U(t,z) = \sum_{I\in\mc I_l} \,U_I(t,z)\,v_I=\sum_{I\in\mc I_l} \sum_{\tau\in S_l} \,\Phi(t,z)w_I^\tau(t,z)\,v_I.
\eeq
In the next section we show that any difference $r$-integral of 
$U(t,z)$ solves the symmetric \qKZ/ equations modulo $N$ and is a singular vector modulo $N$.

\section{The main theorem}\label{main sec}
In this section we construct solutions modulo $N$ of the \qKZ/ equations.

\subsection{The statement of the theorem.}

\begin{thm}\label{main thm} 
Let $   U(t,z)$ be the polynomial in $t_1,\dots,t_l, z_1,\dots,z_n$ with values in
 $V^{\otimes n}[n-2l]$ defined in \eqref{U}. Then for any $r=(r_1,\dots,r_l)$ the difference $r$-integral $  {f}_r(z)=\{   U(t,z) \}^t_r$ is a polynomial in $z_1,\dots,z_n$ which is a solution of the symmetric \qKZ/ equations \eqref{symm KZ} modulo $N$.
The vector ${f}_r(z)$ is singular modulo $N$.
\end{thm}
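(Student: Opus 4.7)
The plan is to mirror the complex hypergeometric proof by reducing each of the symmetric \qKZ/ equations \eqref{symm KZ}, together with the singular vector condition $e\,f_r = 0$, to a cohomological identity asserting that a specific combination involving the integrand $U(t,z)$ can be written as a sum $\sum_{i=1}^{l} D_{t_i} G_i(t,z)$ of discrete $t_i$-differentials modulo $N$. Once such identities are established, applying the difference $r$-integral $\{\cdot\}^t_r$ and invoking the proposition of Section \ref{diff subsec} (that $r$-integrals annihilate discrete differentials modulo $N$) yields the corresponding equations for $f_r(z)\pmod N$. Polynomiality of $f_r(z)$ is immediate, since $U(t,z)$ is already a polynomial in $t,z$ with integer coefficients (by the explicit product formula \eqref{pr} for each $U_I^\tau$), and the difference $r$-integral is $\Z[z]$-valued by definition.

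For the transposition part of \eqref{symm KZ}, I expect the stronger pointwise congruence
\begin{equation*}
U(t,\,z_1,\dots,z_{a+1},z_a,\dots,z_n) \,\equiv\, P^{(a,a+1)} R^{(a,a+1)}(z_a-z_{a+1})\,U(t,z) \pmod N
\end{equation*}
to hold without any discrete differentials. Since $\Phi(t,z)$ is symmetric in $z$, this reduces to an $R$-matrix symmetry of the weight function $\sum_I w_I\,v_I$: swapping $z_a\leftrightarrow z_{a+1}$ redistributes the factors $(t_i-z_a-\ka k)/(t_i-z_a)$ across the basis $\{v_I:I\in\mc I_l\}$ in precisely the pattern prescribed by $P^{(a,a+1)}R^{(a,a+1)}(z_a-z_{a+1})$. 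The key input is $\ka k\equiv -1\pmod N$, which matches the numerator shifts $-\ka k$ in $w_I$ with the $-1$ in $R(u)=(u-P)/(u-1)$; the verification reduces to inspecting how $I$ meets $\{a,a+1\}$ since all other factors of $w_I$ are unaffected.

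For the cyclic equation, I would establish a congruence
\begin{equation*}
U(t,\,z_1-\ka,\,z_2,\dots,z_n) \,-\, P^{(\mu)}\,U(t,\,z_2,\dots,z_n,z_1) \,\equiv\, \sum_{i=1}^{l} D_{t_i} G_i(t,z) \pmod N
\end{equation*}
for explicit polynomials $G_i\in\Z[t,z]$. Using symmetry of $\Phi$ in $z$ and $\ka k\equiv -1$, the ratio $\Phi(t,z_1-\ka,\dots)/\Phi(t,z)$ simplifies to $\prod_{i=1}^l (t_i-z_1+\ka)/(t_i-z_1-(k-1)\ka)$, which carries precisely the factors needed to match the cyclic rearrangement of the $w_I$-part produced by $P^{(\mu)}$, except for a discrepancy concentrated near the ``boundary'' $t_i=z_1$. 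That discrepancy is the mod-$N$ shadow of the Stokes-type identity in the complex proof, and Propositions \ref{shift and int}--\ref{per and int} guarantee that the resulting shifts by $z_1$ and $\ka$ inside the $r$-integral do not alter the final mod $N$ answer.

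Finally, for the singular vector property, I would exhibit $e\,U(t,z)\equiv \sum_i D_{t_i} H_i(t,z)\pmod N$ for suitable $H_i\in\Z[t,z]$: the action of $e$ converts one factor $v_2$ into $v_1$ in each $v_I$, and the resulting reorganization of coordinates collapses to a discrete $t_i$-differential via a direct termwise cancellation, using $\ka k'\equiv 2\pmod N$ to balance the repulsion factors $[t_i-t_j+1]_{k'}$. The main obstacle will be the cyclic step: identifying the polynomials $G_i$ and checking that the cyclic difference admits a discrete-differential representation term by term under the $\Sym_t$-symmetrization. This combinatorial bookkeeping, which splits each of the $l!$ permutation summands of $\Sym_t$ into a bulk contribution and a $D_{t_i}$-contribution, has no shortcut available in characteristic zero and constitutes the technical heart of the theorem.
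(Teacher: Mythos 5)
Your proposal follows essentially the same route as the paper: the transposition equations are verified pointwise for the weight function by the same case analysis on how $I$ meets $\{a,a+1\}$, while the cyclic equation and the singular-vector condition are reduced to discrete $t$-differentials that are annihilated by the difference $r$-integral. One small correction: in the paper's cyclic step the discrepancy is handled not via Propositions \ref{shift and int}--\ref{per and int} but by an exact matching of strings, namely $U_I^\tau(t,z_1-\ka,z_2,\dots,z_n)=U_{\mu^{-1}I}^{\tau'}(t_1,\dots,t_b-\ka,\dots,t_l,z_2,\dots,z_n,z_1)$ for a single index $b$ depending on $I$ and $\tau$, so the difference is literally one $D_{t_b}$-differential killed by the proposition of Section \ref{diff subsec}.
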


We prove the theorem in the next three sections.

\smallskip 

If $g(z)$ is a scalar $N$-constant polynomial and $  {f}(z)$ is a solution of
the \qKZ/ modulo $N$, then $g(z)   {f}(z)$ is a solution too. 
Moreover, we have  a stronger statement.
 Namely, let 
 $f(z) =\sum_{I\in\mc I_l} f_I(z) \,v_I$, \,$f_I(z)\in \Z[z]$, 
 be a solution  of the \qKZ/ equations modulo $N$, 
  and let $a\in\Z$ be such that all polynomials  $f_I(z)/a$ have integer coefficients. Then for 
  any scalar $(N/\on{gcd}(a,N))$-constant $g(z)$, the product $g(z)  {f}(z)$ is also a solution of the
  \qKZ/ equations modulo $N$.

For example, if $g_r(z)\in\Z[z]$ is an $M_r$-constant then $g_r(z)  {f}_r(z)$ is also a solution.
Thus, by Theorem \ref{main thm} all functions in the module of $t$-periods $\{U(t,z)\}^t$ are 
 solutions modulo $N$ of the \qKZ/ equations. We call all such solutions {\it $N$-hypergeometric solutions}. 
 Thus, the $N$-hypergeometric solutions are polynomials 
 of the form $\sum_r g_r(z)   {f}_r(z)$ where $  {f}_r(z)\in N_r\Z[z]$ are given 
 in Theorem \ref{main thm}, and  $g_r(z)\in\Z[z]$ are $M_r$-constants. 
 We give examples of $N$-hypergeometric solutions in Section \ref{example sec}. 

\smallskip 

Seemingly, there are several other ways to obtain solutions  modulo $N$.  
The first way is to choose integers $k$ and $k'$ as any positive integer 
solutions of congruences \eqref{C} and not as the least positive integer solutions.
Such a choice will change the master and weight functions.
It is clear that Theorem \ref{main thm} still holds for the modified integrand $U(t,z)$.
However the choice of new $k$ and $k'$ does not produce new solutions by  Proposition \ref{per and int}. 

The second way is to construct solutions by integrating with respect to a different $A$-basis, 
e.g. $\{\prod [t_i-z_i]_{m_i}, m_i\in\Z_{\geq 0}\}$. Then an analog  of
 Theorem \ref{main thm} still holds. However, a different choice of an $A$-basis produces no new solutions as well
 by Proposition   \ref{shift and int}.

The third way is to consider a divisor $\bar N$ of $N$ and a solution $\bar f(z)$  
 of the \qKZ/ equations modulo $\bar N$.  Then $N\bar f(z)/\bar N$ is a solution of the \qKZ/ 
equations modulo $N$. The solutions obtained in this way also are not new.

\subsection{The first $n-1$ equations.}
In this section we check the first $n-1$ equations of \eqref{symm KZ}.
 Moreover, we show that these equations are already satisfied before taking the difference $r$-integral.

Namely, we check modulo $N$
\beq\label{eq}
  {g}(z_1,\dots,z_{a+1}, z_a, \dots,z_n) \equiv P^{(a,a+1)} R^{(a,a+1)}(z_a-z_{a+1}) \,  {g}(z)
\eeq 
for $  {g}(z)=   w(t,z)=\sum_{I\in\mc I_l}w_Iv_I$.

This is sufficient because  multiplication by the scalar master polynomial
 $\Phi(z,t)$ preserves relation \eqref{eq} as $\Phi$ is symmetric in $z_1,\dots, z_n$. 
 The integration preserves this relation as well since \eqref{eq} does not depend on integration variables $t_1,\dots,t_l$.

We have three cases. 

First, let $I$ be such that $a,a+1\not\in I$. Then $w_I$ is symmetric in $z_a,z_{a+1}$.
The vector $v_I$ has $v_1\otimes v_1$ in positions $a,a+1$. For any $x$, we
have $P\, R(x) v_1\otimes v_1=v_1\otimes v_1$. Therefore, $  {g}=w_Iv_I$ satisfies \eqref{eq} for each $I\in\mc I_l$.

Second, let $I$ be such that $a,a+1\in I$. The vector $v_I$ has $v_2\otimes v_2$ in 
positions $a,a+1$. Again, for any $x$, we have $P\, R(x) v_2\otimes v_2=v_2\otimes v_2$. 
So we have to check that $w_I$ does not change when $a$ and $a+1$ are swapped.
 We claim that for any $\tau\in S_l$, the sum $w_I^\tau+w_I^{(a,a+1)\tau}$ has
  this property. Indeed, if $\tau(a)=b$, $\tau(a+1)=c$, and if $c>b$, then modulo $N$ we have
\bea
&&(w_I^\tau+w_I^{(a,a+1)\tau})(t,z)\\
&&=\frac{h(z,t)}{(t_b-z_a)(t_c-z_a)}\left(\frac {t_c-z_{a}-\ka k}{t_c-z_{a+1}}
+ \frac {{t_b-z_{a}-\ka k}}{t_b-z_{a+1}}
\,\frac{t_b-t_c+1-\ka k'}{t_b-t_c+1}\right)\\
&&\equiv 
\frac{h(z,t)}{(t_b-z_{a+1})(t_c-z_{a+1} )}\left(\frac {t_c-z_{a+1}-\ka k}{t_c-z_{a}}
+ \frac {{t_b-z_{a+1}-\ka k}}{t_b-z_{a}}
\,\frac{t_b-t_c+1-\ka k'}{t_b-t_c+1}\right)\\
&&=(w_I^\tau+w_I^{(a,a+1)\tau})(t,z_1,\dots,z_{a+1},z_a,\dots,z_n).
\eea
Here the common factor $h(z,t)$ does not depend on $z_a,z_{a+1}$. The case $b>c$ is similar. Therefore, the term $  {g}=w_Iv_I$ satisfies \eqref{eq}.

Third, let $I$ be such that $a\in I$ and $a+1\not\in I$. We pair it up with the set $J$ such that $a\not\in J$ and $a+1\in I$ in the most natural way. Namely, let $J=(I\setminus a )\cup\{a+1\}$ be the set $I$ where $a$ is replaced with $a+1$. Let $\tau\in S_l$ and $\tau(a)=b$. We claim that modulo $N$,
\bea
&&P R(z_1-z_{a+1}) (w_I^\tau(t,z)\, v_2\otimes v_1+ w_J^\tau(t,z)\, v_1\otimes v_2) \\
&&\hspace{30pt}\equiv w_I^\tau(t,z_1,\dots,z_{a+1},z_a,\dots,z_n)\, v_2\otimes v_1+ w_J^\tau(t,z_1,\dots,z_{a+1},z_a,\dots,z_n) \,v_1\otimes v_2.
\eea
Indeed, this is equivalent to 
\bea
w_I^\tau(t,z_1,\dots,z_{a+1},z_a,\dots,z_n)\equiv -\frac{1}{z_a-z_{a+1}-1}w_I^\tau(t,z)+\frac{z_a-z_{a+1}}{z_a-z_{a+1}-1} w_J^\tau, \\
w_j^\tau(t,z_1,\dots,z_{a+1},z_a,\dots,z_n)\equiv \frac{z_a-z_{a+1}}{z_a-z_{a+1}-1}w_I^\tau(t,z) -\frac{1}{z_a-z_{a+1}-1} w_J^\tau.
\eea 
For the first equation we have
\bea
&&-\frac{1}{z_a-z_{a+1}-1}w_I^\tau(t,z)+\frac{z_a-z_{a+1}}{z_a-z_{a+1}-1} w_J^\tau \\
&&= h(t,z)\left(\frac{1}{z_a-z_{a+1}-1}\frac{1}{t_b-z_a}-\frac{z_a-z_{a+1}}{z_a-z_{a+1}-1}\frac{t_b-z_a-\ka k'}{t_b-z_a}\frac{1}{t_b-z_{a+1}}\right)\\
&& \equiv \frac{h(t,z)}{t_b-z_{a+1}}=w_I^\tau(t,z_1,\dots,z_{a+1},z_a,\dots,z_n).
\eea
The second equation is similar. Therefore, the sum $  {g}=w_Iv_I+w_Jv_J$ satisfies \eqref{eq}.
Thus $g=w(t,z)$ satisfies  \eqref{eq}.
Equation \eqref{eq} is proved.
 
 \smallskip 
 
We note that if one replaces $k\kappa'$ in $w(t,z)$ by $2$, and $\kappa k$ by $-1$ then $  {w}(t,z)$ will satisfy \eqref{eq} exactly (not only modulo $N$). This property of the weight function $   w(t,z)$ is known in large generality and we could derive it, for example, from \cite[Theorem 4.9]{TV} instead of checking it directly.

Also note that in the complex-valued case, there is an extra ingredient, as one multiplies by a periodic function $W$ before integration, see Section \ref{app 2 sec}. This function is usually not symmetric in $z$ and therefore, the integrand loses symmetry \eqref{eq}.

\subsection{The $n$-th equation}
Clearly, for any $I\in\mc I_l$, 
\bea
(P^{(\mu)})^{-1} v_I=v_{\mu^{-1}I},  \qquad \mu^{-1}I=\{\mu^{-1}(a),\ a\in\ I\}.
\eea
To check the $n$-th equation 
\bea
(P^{(\mu)})^{-1} f_r(z_1-\kappa,z_2,\dots,z_n)&=& f_r(z_2,\dots,z_n,z_1), 
 \eea
we show that if one takes $   U_I^\tau(t,z_1-\kappa,z_2,\dots,z_n)$ and changes $t_i\to t_i-\kappa$, where $i$ depends on $I$ and $\tau$, then the result is 
$   U_{\mu^{-1}I}^{\tau'}(t,z_2,\dots,z_n,z_1)$, where
\bea 
\tau'=\begin{cases} \tau & 1\not\in I, \\  \tau\, (1,2,\dots,l) & 1 \in I.
\end{cases}
\eea
 
For that let us follow what happens to the strings when we shift $z_1$. The change $z_1 \to z_1-\ka$  affects only strings of the form $[t_i-z_1-\ka]_k$, and $[t_i-z_1-\ka]_{k-1}$ which are shifted to $[t_i-z_1]_k$, and  $[t_i-z_1]_{k-1}$ respectively. On Figure \ref{weight pic} it corresponds to the shift of the corresponding strings to the left by one position.

Let $I$ be such that  $1\not \in I$. Then the strings depending on $z_1$ in $   U_I^\tau(t,z)$ are only $[t_i-z_1-\ka]_{k}$, $i=1,\dots,l$. Note that $n\not \in I'=\mu^{-1}I$. Thus the strings depending on $z_1$
$   U_{\mu^{-1}I}^{\tau}(t,z_2,\dots,z_n,z_1)$ are $[t_i-z_1]_{k}$, $i=1,\dots,l$. Therefore,
 the shift $z_1\to z_1\kappa$ takes the strings in $   U_I^\tau(t,z)$ with the strings in $   U_{\mu^{-1}I}^{\tau}(t,z_2,\dots,z_n,z_1)$. The strings which do not involve $z_1$ match as well.
Thus we have $   U_I^\tau(t,z_1-\kappa,z_2,\dots,z_n)=   U_{\mu^{-1}I}^{\tau}(t,z_2,\dots,z_n,z_1)$.
Note that we did not shift $t_i$ in this case.

Let $I$ be such that  $1\in I$. Let $\tau(1)=b$. 
Then the strings depending on $z_1$ in $   U_I^\tau(t,z)$ are $[t_i-z_1-\ka]_{k}$, $i=1,\dots,l$, $i\neq b$ and $[t_b-z_1-\ka]_{k-1}$. Note that $\tau'(n)=\tau \mu(n)=\tau(1)=b$. Thus the strings depending on $z_1$
$   U_{\mu^{-1}I}^{\tau}(t,z_2,\dots,z_n,z_1)$ are $[t_i-z_1]_{k}$, $i\neq b$ and  $[t_b-z_1-\ka]_{k-1}$. 

We shift both $z_1\to z_1-\ka$ and $t_b\to t_b-\ka$ in 
$   U_I^\tau(t,z)$. Clearly, all strings are shifted appropriately to match the strings in $   U_{\mu^{-1}I}^{\tau'}(t,z_2,\dots,z_n,z_1)$.
Thus, in this case we have $   U_I^\tau(t_1,\dots,t_b-\ka,\dots,t_l,z_1-\kappa,z_2,\dots,z_n)=   U_{\mu^{-1}I}^{\tau}(t,z_2,\dots,z_n,z_1)$.

\smallskip 

Therefore, the $n$-th equation is valid for the integrand up to terms of the form $D_{t_b}U_I^\tau(t,z_1-\kappa,z_2,\dots,z_n)$ which vanish after taking the difference $r$-integral.

\subsection{$N$-hypergeometric solutions are singular vectors modulo $N$}
Finally, we claim that  the vector $   U(t,z)$ is a singular vector modulo $N$ up to difference $t$-differentials, namely,
\bea
e    U(t,z)\,\equiv\, \sum_{J\in \mc I_{l-1}} g_J(t,z) v_J \pmod{N},
\eea
where $g_J(t,z)$ are suitable discrete $t$-differentials.
This statement is deduced directly from \cite[Lemma 2.21]{TV}.  
Hence taking a difference integral of $eU(t,z)$ we obtain
the zero vector modulo $N$.

\begin{ex} Let $l=1$. Then 
$e    U(t,z) = \sum_{a=1}^n U_{\{a\}}(t_1,z)\, v_1\ox\dots \ox v_1\,,$
and modulo $N$ we have
\bea
&&\Phi(t_1-\ka,z) -\Phi(t_1, z)=\Phi(t,z) \Big[\prod_{a=1}^n \frac {t_1-z_{a}-\ka k}{t_1-z_{l}} - 1\Big]
\\
&&=
 -\Phi(t,z)\ka k \sum_{a=1}^n
\frac {1}{t_1-z_{a}} \prod_{l=1}^{a-1}\frac {t_1-z_{l}-\ka k}{t_1-z_{l}}
= -\ka k
\sum_{a=1}^n U_{\{a\}}(t_1,z)\equiv 
\sum_{a=1}^n U_{\{a\}}(t_1,z).
\eea
\end{ex}

Note, that one can modify solutions modulo $N$ by adding a vector valued polynomial $f(z)$ of the form \eqref{sol gen form} such that the coordinate polynomials $f_I(z)\in\Z[z]$ are all divisible by $N$. In many cases such a modification allows us to obtain solutions which are singular vectors (not only singular vectors modulo $N$), see Sections \ref{example sec}.

\subsection{Example.}\label{example sec}
Let $n=3$, $l=1$, $\ka=2$, $N=3$, $r=2$. Then $k=1$ and $   U(t_1,z_1,z_2,z_3)$ is a quadratic polynomial in $t_1$.
We have
\bea
\{   U(t_1,z_1,z_2,z_3)\}^{t_1}_2=v_2\otimes v_1\otimes v_1+
v_1\otimes v_2\otimes v_1+ v_1\otimes v_1\otimes v_2.
\eea
Note that the solution is a singular vector only modulo $3$. One can subtract, for example, $3 v_2\otimes v_1\otimes v_1$
to make it a singular vector.

\smallskip 

Let $n=3$, $l=1$, $\ka=2,$ $N=5$, $r=4$. Then $k=2$ and  $   U(t_1,z_1,z_2,z_3)$ is a polynomial of degree  $5$ in $t_1$.
We have
\bea
&&\{   U(t_1,z_1,z_2,z_3)\}^{t_1}_4=(14-z_1-2z_2-2z_3)v_2\otimes v_1\otimes v_1\\&&\hspace{70pt} +(10-z_2-2z_1-2z_3)
v_1\otimes v_2\otimes v_1
+ (6-z_3-2z_1-2z_2)v_1\otimes v_1\otimes v_2.
\eea
Again, the solution is a singular vector only modulo $5$ and one can subtract a multiple of $5$,
 for example, $(30-5(z_1+z_2+z_3)) v_2\otimes v_1\otimes v_1$ to make the solution a singular vector.

\smallskip 

More generally, for $n=3$, $l=1$, $\ka=2$, $N=2k+1$, the vector  $   U(t_1,z_1,z_2,z_3)$ 
is a polynomial of degree $3k-1$ in $t_1$ and we have a difference $r$-integral with $r=2k$. 
The corresponding solution modulo $N$ is a non-homogenous polynomial in $z_1,\dots,z_n$ of degree $k-1$.
 If $N=p$ is prime, then this is the only $N$-hypergeometric solution.

\section{From difference to differential equations}
\label{limit sec}
In this section we show that the solutions modulo $N$ constructed in Theorem \ref{main thm} recover the solutions of the KZ differential equations modulo $N$ constructed in \cite{SV}. For simplicity and following \cite{SV},  we restrict ourselves only to maximal difference $r$-integrals, that is to the case of $r$, such that $r_i\equiv -1 \pmod N$.

\subsection{$N$-hypergeometric  solutions of KZ differential equations}

In this section we remind the construction in \cite{SV} of polynomial solutions modulo an integer $N$ of the 
$\slt$ differential \KZ/ equations.

\vsk.2>
We consider the system of differential equations on a $V^{\ox n}$-valued function $f(z)$,
\bean
\label{KZ}
\ka \,\frac{\der f}{\der z_a} = \sum_{s,\ s\ne a} \frac{P^{(a,s)}-1}{z_a-z_s}\,f, \qquad a=1,\dots,n,
\eean
where $z=(z_1,\dots,z_n)$ and $\ka$ is a parameter of the system. This system is called the {\it \KZ/ differential equations}.
This system commutes with the $\slt$-action on $V^{\ox n}$.

\vsk.2>

Let  $0<\ka<N$ be the same relatively prime integers as before. Let $l$ be a positive integer, $2l\leq n$.
Let $k, k'$ be the same positive integers as in  \eqref{C}.  Let $t=(t_1,\dots,t_l)$.

\vsk.2>
Define the master polynomial 
\bea
\Phi^0(t,z) = \prod_{a=1}^n\prod_{i=1}^l (t_i-z_a)^k\prod_{1\leq i<j\leq l} (t_i-t_j)^{k'}\,.
\eea
For a function $f(t)$ define
\bea
\text{sym}_t \,f(t) &=&
\sum_{\tau\in S_l} \,f(t_{\tau(1)} ,\dots, t_{\tau(l)}) \,.
\eea
For $I=\{1\leq a_1<\dots<a_l\leq n\}\in \mc I_l$,
define the weight functions
\bea
w_I^0(t,z) = \text{sym}_t\,
\prod_{i=1}^l \frac {1}{t_i-z_{a_i}} \,,\qquad U_I^0(t,z)   =  \Phi^0(t,z) \,w_I^0(t,z).
\eea
Then $U_I^0(t,z)$ is a polynomial with integer coefficients. Expand $U^0_{I}(t,z)$ with respect to the $t$-variables
\bea
U_{I}^0(t,z) = \sum_{m_1,\dots,m_l\geq 0} c^0_{I, m}(z)\,t_1^{m_1}\dots t_l^{m_l}
\,,
\qquad c^0_{I, m}(z)\in\Z[z].
\eea
For any $r=(r_1,\dots,r_l)\in \Z_{>0}^l$  such that $r_i\equiv -1 \pmod N$, for all $i$, denote
\bean
\label{sol}
  {f}^{0}_r (z)  = \sum_{I\in\mc I_l} c^0_{I, r}(z)\,v_I \,.
\eean

\begin{thm}\cite{SV}
\label{SV2 thm} For any $r=(r_1,\dots,r_l)\in \Z_{>0}^l$  such that $r_i\equiv -1 \pmod N$, for $i=1,\dots,l$,
the polynomial $  {f}^{0}_r (z)$  satisfies the \KZ/ differential equations modulo $N$
and is a singular vector modulo $N$,
\bean
\label{sF0}
e   {f}_r^{0}(z) &\equiv& 0 
\phantom{aaaaaaaaaaaaa}   \pmod{N},
\\
\label{qF0}
\ka \,\frac{\partial   {f}_r^{0}}{\der z_a}
\,&\equiv&
\, \sum_{s, \ s\neq a} \frac{P^{(a,s)}-1}{z_a-z_s} \,  {f}^{0}_r \pmod{N},\qquad a=1,\dots,n.
\vv.3>
\eean

\end{thm}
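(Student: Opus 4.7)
My plan is to prove Theorem~\ref{SV2 thm} by paralleling the proof of Theorem~\ref{main thm}, with ordinary partial derivatives $\partial/\partial t_i$ replacing the discrete $t$-differences $D_{t_i}$, the KZ operators $\kappa^{-1}\sum_{s\ne a}(P^{(a,s)}-1)/(z_a-z_s)$ replacing the qKZ operators $K_a(z;\ka)$, and the linear functional $\Cf_{\,t_1^{r_1}\cdots t_l^{r_l}}$ replacing the difference $r$-integral $\{\,\cdot\,\}^{t}_r$. The algebraic reason this succeeds is that for the allowed sequences $r$, the coefficient of $t_i^{r_i}$ in $\partial_{t_i}G$ equals $(r_i+1)$ times the coefficient of $t_i^{r_i+1}$ in $G$, which vanishes modulo $N$ because $r_i+1\equiv 0\pmod N$. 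Hence the coefficient extraction kills any $t_i$-derivative modulo $N$, exactly as $\{\,\cdot\,\}^{t}_r$ killed discrete differentials in Section~\ref{diff sec}.

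The main step is to establish two cohomological identities modulo $N$: first, for each $a=1,\dots,n$,
$$\kappa\,\frac{\partial U^0(t,z)}{\partial z_a}\;-\;\sum_{s\ne a}\frac{P^{(a,s)}-1}{z_a-z_s}\,U^0(t,z)\;\equiv\;\sum_{i=1}^l\frac{\partial G^0_{a,i}(t,z)}{\partial t_i}\pmod N,$$
and second,
$$e\,U^0(t,z)\;\equiv\;\sum_{i=1}^l\frac{\partial H^0_i(t,z)}{\partial t_i}\pmod N,$$
for suitable polynomials $G^0_{a,i},H^0_i\in\Z[t,z]\ox V^{\ox n}$. The master polynomial $\Phi^0$ is rigged precisely so that
$$\kappa\,\frac{\partial\log\Phi^0}{\partial z_a}=-\ka k\sum_i\frac{1}{t_i-z_a}\;\equiv\;\sum_i\frac{1}{t_i-z_a}\pmod N,$$
$$\kappa\,\frac{\partial\log\Phi^0}{\partial t_i}=\ka k\sum_a\frac{1}{t_i-z_a}+\ka k'\sum_{j\ne i}\frac{1}{t_i-t_j}\;\equiv\;-\sum_a\frac{1}{t_i-z_a}+\sum_{j\ne i}\frac{2}{t_i-t_j}\pmod N,$$
which are exactly the logarithmic derivatives appearing in the classical complex-valued Schechtman--Varchenko construction for the $\slt$ KZ equations with highest weight $1$ and Cartan entry $2$. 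Given these, the standard partial-fraction manipulations used in the proof of the symmetry identity \eqref{eq} and in the singular-vector argument of Theorem~\ref{main thm} (based on \cite[Lemma~2.21]{TV}) carry over verbatim, the only change being that strings $[x]_k$ are replaced by monomials $x^k$ and the difference quotients become true derivatives.

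The main obstacle is the explicit verification of the two cohomological identities: although combinatorially parallel to the weight-function manipulation for $w_I(t,z)$, it requires a careful partial-fraction bookkeeping for $w^0_I(t,z)$. A cleaner route, which avoids repeating the SV calculation from scratch, is to derive Theorem~\ref{SV2 thm} as a corollary of Theorem~\ref{main thm} via the scaling/limit procedure of Section~\ref{limit sec}: substituting $z\mapsto\la z$, $t\mapsto\la t$ in $U(t,z)$ and taking the top-degree part in $\la$ turns $[t_i-z_a]_k$ into $(t_i-z_a)^k$, $[t_i-t_j+1]_{k'}$ into $(t_i-t_j)^{k'}$, and the modified symmetrizer into the ordinary symmetrizer, so $U(t,z)$ degenerates to $U^0(t,z)$. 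The symmetric \qKZ/ equations of Theorem~\ref{main thm} degenerate in the same limit to the \KZ/ equations \eqref{KZ}, and the maximal difference $r$-integrals of $U(t,z)$ pick out, in top degree, precisely the coefficients $c^0_{I,r}(z)$ defining $f^0_r(z)$. Hence the KZ congruences \eqref{sF0}--\eqref{qF0} follow from their \qKZ/ analogues by passing to the top-degree part, which is the strategy the paper already pursues in its Section~\ref{limit sec}.
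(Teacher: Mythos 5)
Your proposal addresses a statement that the paper itself does not prove: Theorem \ref{SV2 thm} is imported as a black box from \cite{SV}, so any proof you supply is by definition a different route from the paper's. Your first route (ordinary $\partial_{t_i}$ in place of $D_{t_i}$, the coefficient functional $\Cf_{t_1^{r_1}\cdots t_l^{r_l}}$ in place of $\{\cdot\}^t_r$, and the two cohomological identities driven by the logarithmic derivatives of $\Phi^0$) is essentially a reconstruction of the argument of \cite{SV}; your computation of $\ka\,\partial\log\Phi^0/\partial z_a$ and $\ka\,\partial\log\Phi^0/\partial t_i$ is correct and does identify the right mechanism ($r_i+1\equiv 0$ kills the coefficient of $t_i^{r_i}$ in any $t_i$-derivative). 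Your second route --- deducing Theorem \ref{SV2 thm} from Theorem \ref{main thm} by extracting top homogeneous components --- is genuinely new relative to both the paper and \cite{SV}: the paper's Section \ref{limit sec} proves only Proposition \ref{qkz-kz} (that the top degree part of $f_r$ equals $f^0_r$) and explicitly does \emph{not} use it to re-derive the KZ congruences, instead citing \cite{SV}. What this route buys is a proof of \eqref{sF0}--\eqref{qF0} with no new computation; what it costs is some bookkeeping you currently gloss over: the qKZ congruence must first be multiplied by the denominator $\prod_{j<a}(z_a-z_j-\ka-1)\prod_{j>a}(z_a-z_j-1)$ of $K_a$ to become a congruence of polynomials; one then extracts the single homogeneous component of degree $\deg f^0_r+n-2$, in which the unknown degree-$(\deg f^0_r-1)$ part of $f_r$ cancels between the two sides and only $-\ka\,\partial_a f^0_r$ versus $\sum_{s\ne a}(1-P^{(a,s)})(z_a-z_s)^{-1}f^0_r$ survives, each multiplied by $\prod_{j\ne a}(z_a-z_j)$; finally one must cancel this common factor modulo $N$, which is legitimate because it is monic in $z_a$ and monic polynomials are not zero divisors in $(\Z/N\Z)[z]$ (plain cancellation would fail for composite $N$). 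With those three points made explicit, the degeneration argument is sound; the singular-vector claim \eqref{sF0} follows immediately by extracting the top component of $e f_r\equiv 0$, since $e$ preserves degree.
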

The solutions $  {f}^{0}_r$ of the \KZ/ equations modulo $N$ given by Theorem \ref{SV2 thm} are called
the  $N$-hypergeometric solutions of the \KZ/ equations.

\smallskip  

Note that the polynomials $U_I^0(t,z)$ are homogeneous polynomials of variables $t,z$ of degree $nlk+l(l-1)k'/2-l$.
Hence the $N$-hypergeometric solution 
$  {f}_r^{0}(z)$ is a homogeneous polynomial in $z$ of degree
\bea
d_r=nlk+l(l-1)k'/2-l-\sum_{i-1}^l r_i.
\eea

\subsection{The top degree of the $N$-hypergeometric solutions}
Unlike the solutions $  {f}_r^{0}(z)$ of the \KZ/ equations, the solutions $  {f}_r(z)$  of the \qKZ/ equations given by Theorem \ref{main thm} are not homogeneous. It turns out that the top degree part
of these solutions coincides with $  {f}_r^{0}(z)$.

Define degree of polynomials in $\Z[t,z]$ by setting
\bea
\deg t_i=\deg z_j=1, \qquad i=1,\dots,l,\  j=1,\dots,n.
\eea

Let $r=(r_1,\dots,r_l)\in \Z_{>0}^l$  be such that $r_i\equiv -1 \pmod N$, for all $i$. 
Let ${f}_r(z)$ be the $N$-hypergeometric  solution of the \qKZ/ equations defined
 in Theorem \ref{main thm}.
Let ${f}_r^0(z)$ be the $N$-hypergeometric  solution of the \KZ/ equations defined
 in Theorem \ref{SV2 thm}.

\begin{prop}\label{qkz-kz}
We have
\bea
  {f}_r(z)=  {f}^{0}_r(z)+\dots,
\eea
where the dots denote the terms of degree less than $\deg   {f}^{0}_r(z)$.
\end{prop}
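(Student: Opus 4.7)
The plan is to compare the two integrands $U(t,z)$ and $U^0(t,z)$ under the total grading $\deg t_i = \deg z_j = 1$, and then read off coefficients in the two bases $\prod_i [t_i]_{m_i}$ and $\prod_i t_i^{m_i}$ of $\Z[t_1,\dots,t_l]$. Throughout, let $D = nlk + \binom{l}{2}k' - l$, which is the total degree of both $U_I$ and $U_I^0$.

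The first step is to observe that the top-total-degree part of $U(t,z)$ equals $U^0(t,z)$. Each Pochhammer polynomial $[x]_m = \prod_{j=0}^{m-1}(x-j\kappa)$ has leading term $x^m$ of total degree $m$. Applying this to every factor in the product formula \eqref{pr}, the top-degree part of $U_I^\tau(t,z)$ equals
\bea
\prod_{i<j}(t_i-t_j)^{k'}\,\prod_{i=1}^l\,(t_{\tau(i)}-z_{a_i})^{k-1}\,\prod_{s\neq a_i}(t_{\tau(i)}-z_s)^k.
\eea
On the other hand, expanding $U_I^0 = \Phi^0\,w_I^0 = \sum_\tau \Phi^0\cdot\prod_i(t_{\tau(i)}-z_{a_i})^{-1}$ and cancelling one factor of $(t_{\tau(i)}-z_{a_i})^k$ against $\Phi^0$ for each $i$ yields exactly the same sum over $\tau\in S_l$. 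Hence $U_I(t,z)^{\mathrm{top}} = U_I^0(t,z)$ and summing over $I\in\mc I_l$ gives $U(t,z)^{\mathrm{top}} = U^0(t,z)$.

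The second step is a base-change argument. Write $U_I(t,z) = \sum_m c_{I,m}(z)\prod_i [t_i]_{m_i}$ and $U_I^0(t,z) = \sum_m c_{I,m}^0(z)\prod_i t_i^{m_i}$, where $c_{I,m}^0(z)$ is homogeneous of degree $D-\sum m_i$. Since $\prod_i [t_i]_{m_i} = \prod_i t_i^{m_i} + \sum_{m' < m} A_{m',m}\prod_i t_i^{m'_i}$ with $A_{m',m}\in\Z$ constants (no $z$-dependence), the coefficient of $\prod t_i^{m'_i}$ in the monomial expansion of $U_I$ is $c_{I,m'}(z) + \sum_{m > m'}A_{m',m}\,c_{I,m}(z)$. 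Since $U_I$ has total degree $D$, we have $\deg_z c_{I,m}\leq D-\sum m_i$, so for $m > m'$ the contribution $A_{m',m}\,c_{I,m}(z)$ has degree strictly less than $D-\sum m'_i$. Comparing with $U_I^{\mathrm{top}} = U_I^0$ yields
\bea
[\,c_{I,m'}(z)\,]^{(D-\sum m'_i)} = c_{I,m'}^0(z).
\eea

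Finally, since $r_i\equiv -1\pmod N$ for all $i$ implies $N_r = 1$, the components of the two solutions reduce to $f_{r,I}(z) = c_{I,r}(z)$ and $f_{r,I}^0(z) = c_{I,r}^0(z)$. The previous step with $m' = r$ gives $f_{r,I}^0(z) = [f_{r,I}(z)]^{(d_r)}$, and summing over $I\in\mc I_l$ gives $f_r(z) = f_r^0(z) + (\text{terms of total degree less than }d_r)$, as required. No step is a serious obstacle; the main observation is that both the string-to-monomial transition on $\Z[t]$ and the expansion of $[x]_m$ as $x^m + (\text{lower})$ are triangular with respect to the total $(t,z)$-grading, so top degrees are preserved throughout.
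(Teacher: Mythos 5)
Your proof is correct and follows essentially the same route as the paper's: both rest on the observation that each Pochhammer string $[x]_m$ has leading term $x^m$, so the top-degree part of $U(t,z)$ is $U^0(t,z)$, combined with the triangularity of the string-to-monomial base change in $\Z[t]$, which ensures that the top-degree part of each string-basis coefficient $c_{I,m}(z)$ is the monomial-basis coefficient $c^0_{I,m}(z)$. Your write-up merely makes explicit the degree bookkeeping ($\deg_z c_{I,m}\le D-\sum m_i$ and $N_r=1$ for maximal $r$) that the paper leaves implicit.
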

\begin{proof}
For a polynomial $X$ in variables $t,z$ and integer coefficients, we write $X=Y+\dots$ when $Y$ is a homogeneous polynomial in $t,z$ and all terms of $X-Y$ have degree smaller than the degree of $Y$.

For any integer $a$, we have
$t_i-t_j-a\ka]_m=(t_i-t_j)^m+\dots$ and $[t_i-z_j-a\ka]_m=(t_i-z_j)^m+\dots$.
Hence  $U(t,z)=   U^0(t,z)+\dots$.
At the same time if 
\bea
   U(t,z)=\sum_m   {c}_m(z) \prod_{i=1}^l [t_i]_{m_i}, \qquad   {c}_m(z)=  {c}_m^{0}(z) +\dots,
\eea 
then $U(t,z)=\sum_m   {c}_m^{0}(z) \prod_{i=1}^lt_i^{m_i}+\dots$.
Therefore, 
$U^0(t,z)=\sum_m   {c}_m^{0}(z) \prod_{i=1}^lt_i^{m_i}$\,,
\ and the proposition is proved.
\end{proof}

Proposition \ref{qkz-kz} says that there are at least as many $N$-hypergeometric
 solutions of the \qKZ/ equations as the $N$-hypergeometric solutions of the \KZ/ equations.

It is an interesting problem to describe the module of the $N$-hypergeometric solutions in either case.
 The answer depends on arithmetic properties of $N,n,\ka,l$.

\appendix

\section{Hypergeometric solutions}
\label{sec a}
In this section we remind the limit which produces the \KZ/ equations from the \qKZ/ equations. 
We also remind the formulas for the hypergeometric complex-valued solutions 
of the $\slt$ \KZ/  differential and \qKZ/ difference equations
with values in $\Sing \,V^{\ox n}[n-2]$, that is, in the case when $l=1$.

\subsection{\KZ/ equations as a limit of \qKZ/ equations}\label{eqn limit sec}
It is well known, that the \KZ/ equations can be obtained as a limit of the
\qKZ/ equations which explains the origin of Proposition \ref{qkz-kz}. For the reader's convenience, we recall the construction.

Let $f(z_1,\dots, z_n)$ satisfy the \qKZ/ equations,
\beq
\label{Ke}
f(z_1,\dots, z_a-\ka,\dots, z_n)\,=\,
K_a(z\:;\ka)\,f(z),\qquad a=1,\dots,n.
\vv.3>
\eeq
Define  
$g(w_1,\dots,w_n;\al) = f(w_1/\al, \dots, w_n/\al)$.
Then
\beq
\label{K)}
g(w_1,\dots, w_a-\al\ka,\dots, w_n)\,=\,
K_a(w/\al\:;\ka)\,g(w),\qquad a=1,\dots,n,
\vv.3>
\eeq
where
\bean
\label{K, alpha}
K_a(w/\al;\ka) 
&=&
R^{(a,a-1)}(w_a-w_{a-1}-\al \ka;\al)\,\dots\, R^{(a,1)}(w_a-w_1-\al\ka;\al)
\\
\notag
&\times&
 R^{(a,n)}(w_a-w_n;\al)\,\dots\,R^{(a,a+1)}(w_a\<-w_{a+1};\al)\,,
\eean
and 
\bea
R(u;\al)=\frac{u-\al P}{u-\al} = 1\,-\, \al\,\frac{P-1}{u-\al} \,.
\eea
In the limit $\al\to 0$, the \qKZ/ difference equations \eqref{K, alpha} turn into the \KZ/ differential equations
\eqref{KZ}.

\subsection{Solutions of the \KZ/ equations}
\label{sec a1}

Define the master function and weight functions
\bea
\Phi(t_1,z) = \prod_{a=1}^n(t_1-z_a)^{-1/\ka},
\qquad
w_{\{a\}}(t_1,z) = \frac 1{t-z_a}\ , \qquad a=1,\dots, n.
\eea
Consider the vector of integrals
\bea
  {F}_{(\ga)}^0(z)\ =\ \sum_{a=1}^n \ \int_{\ga(z)} \Phi(t_1,z) \,w_{\{a\}}(t_1,z)\,dt_1 \, v_{\{a\}}\,  \in \ V^{\ox n}[n-2]\,,
\eea
where $\ga(z)$ is a flat family of 1-cycles associated with the multi-valued master function
$ \Phi(t_1,z)$.
Then $  {F}_{(\ga)}^0(z)$ is a solution of the \KZ/ differential equations \eqref{KZ} and $e  {F}_{(\ga)}^0(z)=0$.

\vsk.2>
In this case the solutions are labeled by the families of cycles $\ga(z)$.

\subsection{Solutions of the \qKZ/ equations}\label{app 2 sec}
Define the master function and weight functions
\bea
\Phi(t_1,z) = \prod_{a=1}^n\frac{\Ga\big(\frac {t_1-z_a+1}{-\ka}\big)}{\Ga\big(\frac {t_1-z_a}{-\ka}\big)},
\qquad
w_{\{a\}}(t_1,z) = \frac 1{t_1-z_a}\prod_{l=1}^{a-1} \frac{t_1-z_l+1}{t-z_l}\ , \qquad a=1,\dots, n.
\eea
Define the trigonometric weight functions 
\bea
W_{\{b\}} (t_1,z) \,=\, \frac{ e^{\pi i (t-z_b)/\ka}}{\sin(\pi (t-z_b)/\ka)}\,\prod_{l=1}^{b-1}
\,\frac{\sin(\pi (t-z_l+1)/\ka)}{\sin(\pi (t-z_l)/\ka)}\,, 
\eea
\bea
W_{\{b\}}^{\text{sing}} (t_1,z) = W_{\{b\}} (t_1,z)  -  W_{\{b+1\}} (t_1,z)\,,
\qquad b=1,\dots,n-1.
\eea
Consider the vector of integrals
\bea
  {F}_{\{b\}}(z)\ =\ \sum_{a=1}^n \ \int_{i\R}  
\Phi(t_1,z) \,w_{\{a\}}(t_1,z)\,W_{\{b\}}^{\text{sing}} (t_1,z) \,dt_1 \, v_{\{a\}}\,  \in \ V^{\ox n}[n-2]\,,  
\eea
$ b=1,\dots,n-1$, where $i\R$ is the imaginary axis properly deformed depending on values of $z$.
Then $  {F}_{\{b\}}(z)$ is a solution of the \qKZ/ equations \eqref{Ki} and $e  {F}_{\{b\}}(z)=0$, see \cite[Section 8]{TV}.

\vsk.2>
In this case the solutions are labeled by the trigonometric weight functions $W_{\{b\}}^{\text{sing}} (t_1,z) $.

\vsk.2>

One may study the functions $  {F}_{\{b\}}(z)$ under the limit $\al \to 0$ as in Section \ref{eqn limit sec} and show that the limit
gives solutions $  {F}_{(\ga)}^0(z)$ of Section \ref{sec a1}, see \cite[Section 8]{TV}. The proof of this fact is rather delicate, if
compared with the trivial observation in Proposition \ref{qkz-kz}.

\bigskip

\end{document}